\documentclass[11pt,a4paper]{article}

\newcommand{\G}{\mathcal{G}}
\newcommand{\F}{\mathcal{F}}
\renewcommand{\P}{\mathcal{P}}
\newcommand{\M}{\mathcal{M}}

\usepackage[utf8]{inputenc}
\usepackage[T1]{fontenc}
\usepackage[british]{babel}
\usepackage{lmodern}
\usepackage{amsmath,amssymb,amsthm,mathtools}
\usepackage{enumitem}
\usepackage{microtype}
\usepackage[margin=30mm]{geometry}
\usepackage{hyperref}

\newtheorem{theorem}{Theorem}[section]
\newtheorem{lemma}[theorem]{Lemma}
\newtheorem{proposition}[theorem]{Proposition}
\newtheorem{corollary}[theorem]{Corollary}

\theoremstyle{definition}
\newtheorem{definition}[theorem]{Definition}
\newtheorem{remark}[theorem]{Remark}
\newtheorem{assumption}[theorem]{Assumption}

\title{Definability from Factorised Symmetry in Ultrapowers\\[2mm]
\large (Affine Geometries and Beyond)}
\author{Mike Stannett\\[2mm]
\small Department of Computer Science\\University of Manchester, UK}
\date{28 July 2026}

\begin{document}
\maketitle

\begin{abstract}
A recent theorem of Madar\'asz
characterises the parameter-free concepts of a finitely field-definable coordinate
geometry by invariance under its affine automorphisms. This paper isolates the
affine-geometric ingredient in that proof and extends the argument to include additional finite-relational
coordinate geometries. The new hypothesis is \emph{semilinear faithfulness}: in every
ultrapower, each automorphism of the induced geometry factors as an affine automorphism
of the geometry followed by the componentwise action of an automorphism of the expanded
base structure. Under this hypothesis, every parameter-free ambient-definable relation is
a concept of the geometry exactly when it is preserved by the affine automorphism group.
A corresponding dual inclusion theorem is obtained for concept sets. The framework
recovers the original finitely field-definable theorem and applies beyond pure-field
definability, including a named-scalar geometry and a frame-expanded geometry over an
exponential field. A final section records the analogous factorisation principle for
uniformly coded symmetry groups in arbitrary, including non-geometric, one-sorted structures.
\end{abstract}

\medskip
\noindent\textbf{Keywords.} Definability; coordinate geometry; affine automorphisms;
ultrapowers; semilinear maps; Erlangen programme.

\medskip
\noindent\textbf{2020 Mathematics Subject Classification.}
Primary 03C40; Secondary 03C20, 51A05.

\section{Introduction and overview}
\label{sec:1}

Felix Klein's Erlangen programme studies the properties left invariant by a chosen
transformation group~\cite[\S\S1--2]{Klein1872}.
The starting point for this paper is a theorem of \cite{MSS26}
that gives an Erlangen-style
connection between \emph{definability} in a coordinate geometry and
\emph{invariance} under its affine automorphisms. Our aim is to isolate the precise affine-geometric
input used by its proof and to identify a hypothesis under which the
same proof mechanism survives when the coordinate field is expanded by
additional structure.

The proof proceeds by transferring an invariance
statement to ultrapowers, using an affine-geometric decomposition of
ultrapower automorphisms into an affine part and a coordinatewise
base-structure automorphism, and then returning from ultrapower
invariance to definability via a Svenonius-type criterion (in an
ultrapower-only form due to András Simon; see \cite[Theorem~6.12]{MSS26}).

In the original finitely field-definable (FFD) setting of
\cite{MSS26}, the semilinear decomposition arising from the Fundamental Theorem of Affine Geometry (FTAG)
 yields a residual
base-field automorphism. Because all primitive relations are definable
in the pure field/ordered-field reduct, that residual automorphism
automatically preserves them, and the proof goes through.

In expanded settings this automatic preservation can fail. We therefore
introduce \emph{semilinear faithfulness}, a structural hypothesis
asserting that in every ultrapower, every automorphism of the geometry
factors as an affine automorphism of the geometry composed with the
coordinatewise action of an automorphism of the \emph{expanded} base
structure. Under semilinear faithfulness we recover an Erlangen-style
characterisation theorem (Theorem \ref{thm:6.1}) and the corresponding duality
between definability and symmetry (Theorem \ref{thm:7.2}), and we give examples
showing that the hypothesis genuinely extends beyond the pure-field
case.

\section{Framework: coordinate geometries, concepts, and affine automorphisms}
\label{sec:2}

\subsection{Ambient base structures and coordinate space}
\label{sec:2.1}

Fix an integer dimension $d\ge 2$, and let $\F$ be a first-order structure expanding either:

\begin{itemize}
\item
  an \emph{ordered field} $\langle F; +,\cdot,0,1,<\rangle$, or
\item
  a \emph{field} $\langle F; +,\cdot,0,1\rangle$.
\end{itemize}

The language of $\F$ may include additional predicate,
function, and constant symbols beyond the field/ordered-field reduct.
When working in the field setting we assume throughout that $\F$
has more than two elements (i)n the ordered-field setting
this is automatically true, because ordered fields are infinite).
We work on the coordinate domain $F^d$. For each $n\ge 1$ we
identify $(F^d)^n\cong F^{dn}$ in the usual way.

Unless explicitly stated otherwise, all definability in this paper is
\emph{parameter-free}.

\subsection{The key ternary relation \texorpdfstring{$\mathsf K$}{K}}
\label{sec:2.2}

To treat ordered-field and field settings uniformly, we work with a
distinguished ternary relation symbol $\mathsf K$ (the ``key relation''),
interpreted as follows.

\begin{itemize}
\item
  \emph{Ordered-field case.} $\mathsf K$ is interpreted as the
  standard \emph{betweenness} relation $\mathsf{Bw}$ on $F^d$: \[
  \mathsf{Bw}(a,b,c) \iff \exists t\,\big(0\le t\le 1\ \wedge\ b=(1-t)a+tc\Big),
  \] where arithmetic is interpreted componentwise in $F^d$.
\item
  \emph{Field case.} $\mathsf K$ is interpreted as the standard
  \emph{collinearity} relation $\mathsf{Col}$ on $F^d$: \[
  \mathsf{Col}(a,b,c) \iff (a=b)\ \vee\ (b=c)\ \vee\ (a=c)\ \vee\ \exists t\,\big(b=(1-t)a+tc\big).
  \]
\end{itemize}

In either setting, the intended interpretation of $\mathsf K$ is
parameter-free definable in the appropriate reduct of $\F$
(ordered-field reduct for $\mathsf{Bw}$, field reduct for
$\mathsf{Col}$), and hence in $\F$.

\begin{remark}[Geometric vs. non-geometric structures]
\label{rem:2.1}

Except in Section \ref{sec:9} where we deliberately consider more general structures,
all examples treated in this paper are ``geometric'' in the sense that
$\mathsf K$ is included among the primitives. 
\end{remark}

\begin{remark}[Tuple metavariables and formulas]
	\label{rem:2.3}
	 In definitions
and proofs we frequently use tuple metavariables such as $x\in F^d$ or
$\bar x\in (F^d)^n$. Formally, when writing formulas in the language
of $\F$ these stand for $d$-tuples (respectively
$dn$-tuples) of object variables ranging over $F$, with the obvious
componentwise interpretation.
\end{remark}

\subsection{Coordinate geometries}
\label{sec:2.3}

A \emph{finite-relational $\F$-definable coordinate geometry
on $F^d$} is a structure 
\[
\G\ :=\ \langle F^d;\ \mathsf K,\ P_1,\dots,P_k\rangle
\] in the finite relational language $L_{\G}:=\{\mathsf K,P_1,\dots,P_k\}$, such that each primitive relation $P_i^{\G}\subseteq (F^d)^{n_i}$ is
parameter-free $\F$-definable.
When we need to emphasise the ambient base structure we write
$\G(\F)$.

\begin{remark}[Relation with the earlier framework]
\label{rem:2.4} 
In
\cite{MSS26}, a coordinate geometry is required to \emph{define} the
relevant key relation $\mathsf K$; it need not name $\mathsf K$ as a
primitive. We work throughout with the corresponding definitional
expansion in which $\mathsf K$ is named. Since $\mathsf K$ is
already parameter-free definable in the original geometry, adjoining it
as a primitive changes neither the parameter-free concepts nor the
automorphism group. Thus results from \cite{MSS26} apply without loss of
generality in our $\mathsf K$-named presentation.
\end{remark}

\subsection{Concepts}
\label{sec:2.4}

Let $\G$ be a coordinate geometry. For each $n\ge 1$, let
$\mathrm{Conc}_n(\G)$ be the set of all relations
$R\subseteq (F^d)^n$ for which there exists a parameter-free
$L_{\G}$-formula $\varphi(\bar x)$ such that for all
$\bar a\in (F^d)^n$, \[
\bar a\in R \iff \G\models \varphi(\bar a).
\] Write \[
\mathrm{Conc}(\G)\ :=\ \bigcup_{n\ge 1}\mathrm{Conc}_n(\G).
\] Elements of $\mathrm{Conc}(\G)$ are called \emph{concepts} of
$\G$.

\begin{remark}[Concepts for arbitrary structures]
	\label{rem:2.2}
	Although we
	introduce ${\rm Conc}_n(\G)$ for coordinate geometries, $\G$, the same
	definition makes sense for any first-order structure $\M$ (in its own
	language). We will use this in Section \ref{sec:9.1}.
\end{remark}

\subsection{Affine transformations and affine automorphisms}

Let $\mathrm{Aff}(F^d)$ denote the group of affine transformations of
$F^d$, i.e.~maps of the form \[
 x\mapsto Ax+b,
\] where $A\in \mathrm{GL}_d(M)$ and $b\in F^d$.

Let $\mathrm{Aut}(\G)$ denote the automorphism group of the first-order
structure $\G$. Define the \emph{affine automorphism group} of $\G$
by \[
\mathrm{AffAut}(\G)\ :=\ \mathrm{Aut}(\G)\cap \mathrm{Aff}(F^d).
\] Thus $\mathrm{AffAut}(\G)$ consists of those automorphisms of the
geometry whose underlying permutation of $F^d$ is affine with respect
to the ambient field operations on $F$.

\subsection{Preservation}
\label{sec:2.6}

If $R\subseteq (F^d)^n$ and $\alpha\in \mathrm{Sym}(F^d)$, define
the image relation \[
\alpha(R)\ :=\ \{(\alpha(a_1),\dots,\alpha(a_n)):(a_1,\dots,a_n)\in R\}.
\] We say that $R$ is \emph{preserved by} a subgroup
$H\le \mathrm{Sym}(F^d)$ if $\alpha(R)=R$ for all $\alpha\in H$.
In particular, we will frequently consider preservation by
$\mathrm{AffAut}(\G)$.

\section{Model-theoretic tools: ultrapowers and definability from invariance}
\label{sec:3}

This section fixes ultrapower notation and records the
definability-from-invariance criterion that drives the return step from
``preserved by automorphisms'' to ``definable''.

\subsection{Ultrapowers and induced coordinate geometries}
\label{sec:3.1}

Let $U$ be an ultrafilter on a set $I$. Write $\mathcal \F^U$ for
the ultrapower of $\F$ by $U$, with underlying set $F^U$.
The coordinate domain of the induced geometry will be $(F^U)^d$.

\begin{definition}[Induced coordinate geometry in an ultrapower]
	\label{def:3.1}
Let $G=\langle F^d;\mathsf K,P_1,\dots,P_k\rangle$ be a
finite-relational $\F$-definable coordinate geometry. Define
\[
\G(\F^U)\ :=\ \langle (F^U)^d;\ \mathsf K^U,\ P_1^U,\dots,P_k^U\rangle,
\] where
\begin{itemize}[leftmargin=2em]
\item $\mathsf K^U$ is interpreted as $\mathsf{Bw}$ or
$\mathsf{Col}$ in $(F^U)^d$, according to whether $\F$
expands an ordered field or a field;
\item each $P_i^U\subseteq ((F^U)^d)^{n_i}$ is interpreted by the same
parameter-free $\F$-formula that defines $P_i$ in
$\F$.
\end{itemize}
\end{definition}


\subsection{Transfer of parameter-free definability}
\label{sec:3.2}

Let $R\subseteq (F^d)^n$ be parameter-free $\F$-definable, defined (say)
by some parameter-free $\F$-formula $\varphi(\bar x)$. 
Define $R^U\subseteq ((F^U)^d)^n$ by the same formula in
$\F^U$: \[
\bar a\in R^U \iff \F^U\models \varphi(\bar a).
\] By Łoś's theorem, this is equivalent to the usual ultrapower
interpretation of $R$.

For an arbitrary relation $S\subseteq (F^d)^n$, not assumed to be
$\F$-definable, we use $S^U$ for its ordinary ultrapower relation on
$((F^d)^U)^n$, transported to $((F^U)^d)^n$ along the canonical
identification $(F^d)^U\cong(F^U)^d$. When $S$ is parameter-free
$\F$-definable, this agrees with the interpretation by the same formula
just described.

In particular, every primitive relation of $\G$ transfers to the
corresponding primitive relation of $\G(\F^U)$, and every
parameter-free $L_{\G}$-definable relation of $\G$ transfers to a
parameter-free $L_{\G}$-definable relation of $\G(\F^U)$.

\subsection{Definability from invariance: Simon's ultrapower criterion}
\label{sec:3.3}

Following \cite{MSS26} we will use a definability-from-invariance criterion of Svenonius type,
in an ultrapower-only form due to Andr\'as Simon.

Fix a first-order language $L$, an $L$-structure $\M$ with universe $M$, and an
$n$-ary relation $R\subseteq M^n$. Write
$\M_R:=\langle A;L\cup\{R\}\rangle$ for the expansion of $\M$ by a new
predicate symbol interpreted as $R$.

Given an ultrafilter $U$ on a set $I$, write $\M^U$ for the
ultrapower. Define the induced relation $R^U\subseteq (M^U)^n$ by \[
([a^1]_U,\dots,[a^n]_U)\in R^U \iff \{i\in I : (a^1(i),\dots,a^n(i))\in R\}\in U.
\]

\begin{theorem}[Simon]
\label{thm:3.2}
The following are equivalent.

\begin{enumerate}
\item
  The relation $R$ is parameter-free definable in $\M$ by an
  $L$-formula.
\item
  For every ultrafilter $U$ and every $\alpha\in \mathrm{Aut}(\M^U)$,
  one has $\alpha(R^U)=R^U$.
\end{enumerate}
\end{theorem}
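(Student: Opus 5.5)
The implication (1)$\Rightarrow$(2) is routine and I would dispose of it first. Suppose $R$ is defined in $\M$ by a parameter-free $L$-formula $\varphi(\bar x)$. By Łoś's theorem, for every ultrafilter $U$ and all $\bar a\in (M^U)^n$ we have $\bar a\in R^U$ iff $\M^U\models\varphi(\bar a)$. Hence $R^U$ is parameter-free definable in $\M^U$ by the same formula. Any automorphism $\alpha$ of $\M^U$ preserves the satisfaction of parameter-free formulas, so $\alpha(R^U)\subseteq R^U$. Applying the same fact to $\alpha^{-1}$ gives equality.

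For (2)$\Rightarrow$(1) I would argue by contraposition via Svenonius' theorem. Suppose $R$ is not parameter-free definable in $\M$. Let $T=\mathrm{Th}(\M_R)$ and $\Sigma(\bar x,\bar y)$ be the type
\[
\{R(\bar x),\neg R(\bar y)\}\cup\{\psi(\bar x)\leftrightarrow\psi(\bar y):\psi\ \text{an } L\text{-formula}\}.
\]
I would show by compactness that $\Sigma$ is realised in some elementary extension $\M_R^*$ of $\M_R$. If it were not, finitely many $L$-formulas $\psi_1,\dots,\psi_m$ would already force $R$ to be a union of Boolean combinations of them, making $R$ $L$-definable. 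So we get $\bar a\in R^*$ and $\bar b\notin R^*$ with the same $L$-type in $\M^*$.

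The next step is to upgrade this to an automorphism. Pass to a sufficiently saturated, strongly homogeneous elementary extension $\mathcal N_R$ of $\M_R$, or at least one that is $\omega$-saturated and strongly $\omega$-homogeneous as an $L$-structure, and realise $\bar a,\bar b$ there. Homogeneity then gives $\alpha\in\mathrm{Aut}(\mathcal N)$ with $\alpha(\bar a)=\bar b$, so $\alpha(R^{\mathcal N})\ne R^{\mathcal N}$.

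The main obstacle is that (2) only speaks about ultrapowers, not about arbitrary homogeneous extensions. I would handle it in two steps. First, I would use that sufficiently good ultrapowers are saturated: take a countably incomplete, $\lambda^+$-good ultrafilter $U$ on $I=\lambda\ge|L|+\aleph_0$ (Keisler/Kunen). Then $\M_R^U$ is $\lambda^+$-saturated, and by Łoś its reduct to $L$ is $\M^U$ with the extra predicate equal to $R^U$. Saturation realises $\Sigma$ in $\M_R^U$, giving $\bar a\in R^U$, $\bar b\notin R^U$ with equal $L$-types. Second, I need an automorphism of $\M^U$ carrying $\bar a$ to $\bar b$, and this is the delicate point, since $\lambda^+$-saturation of size $2^\lambda$ need not give homogeneity. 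Here I would try a back-and-forth of length $|M^U|$, which works if $\M^U$ is saturated in its own cardinality. Failing that, I would use Simon's iterated construction: replace the single ultrapower by a limit ultrapower, or equivalently an ultrapower by a suitably chosen ultrafilter (the iterated ultrapower $(\M^{U})^{V}\cong \M^{U\otimes V}$), so that the required back-and-forth system closes up. This back-and-forth/homogeneity step within the class of ultrapowers is where I expect the real work. With it, the resulting $\alpha\in\mathrm{Aut}(\M^U)$ violates (2), completing the contraposition.
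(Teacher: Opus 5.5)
You prove (1)$\Rightarrow$(2) correctly, by the paper's own argument. Your compactness step, which produces $\bar a\in R^U$ and $\bar b\notin R^U$ with the same $L$-type in a $\lambda^+$-saturated ultrapower, is also sound. The paper gives no argument for (2)$\Rightarrow$(1); it defers entirely to \cite[Theorem~6.12]{MSS26}.

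\textbf{The gap.} It is the step you flag yourself: you never produce an automorphism of an \emph{ultrapower} of $\M$ sending $\bar a$ to $\bar b$. That is the whole content of Simon's criterion beyond Svenonius' theorem, and neither of your routes supplies it.
\begin{enumerate}
\item A back-and-forth of length $|M^U|$ needs $\M^U$ to be saturated in its own cardinality. Goodness gives only $\lambda^+$-saturation, while $|M^U|\ge 2^\lambda$ for infinite $M$. So this works only when $2^\lambda=\lambda^+$, a GCH instance that ZFC does not provide. This is exactly why Keisler's original isomorphism theorem assumed GCH.
\item The fallback is not an argument. Limit ultrapowers are not in general ultrapowers, so an automorphism of one does not contradict (2). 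Passing to an iterated ultrapower $\M^{U\otimes V}$ does not by itself create any homogeneity that would make a back-and-forth ``close up''.
\end{enumerate}

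\textbf{The missing ingredient.} Use the Keisler--Shelah isomorphism theorem in Shelah's ZFC form: elementarily equivalent structures have isomorphic ultrapowers by a single ultrafilter.
\begin{enumerate}
\item Since $\bar a,\bar b$ have the same $L$-type in $\M^U$, the expansions $(\M^U,\bar a)$ and $(\M^U,\bar b)$ by new constants are elementarily equivalent.
\item Hence there are an ultrafilter $V$ on some set $J$ and an isomorphism $f:(\M^U,\bar a)^V\to(\M^U,\bar b)^V$.
\item This $f$ is an automorphism of $(\M^U)^V$ with $f(d\bar a)=d\bar b$, where $d$ is the diagonal embedding.
\item Since $d\bar a\in(R^U)^V$ and $d\bar b\notin(R^U)^V$, the map $f$ moves $(R^U)^V$.
\item The canonical isomorphism $(\M^U)^V\cong\M^W$, for the product ultrafilter $W$ on $I\times J$, carries $(R^U)^V$ onto $R^W$. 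So $f$ contradicts (2) for $W$.
\end{enumerate}

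Equivalently, you can keep your Svenonius step. Take an elementary extension $\M_R\preceq(\mathcal N,R')$ with $g\in\mathrm{Aut}(\mathcal N)$ and $g(R')\neq R'$. Keisler--Shelah gives an isomorphism $h:(\mathcal N,R')^V\to\M_R^V$. Then $h\circ g^V\circ h^{-1}\in\mathrm{Aut}(\M^V)$ does not preserve $R^V$. Either way, once this theorem is invoked, the search for a homogeneous ultrapower becomes unnecessary.
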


\begin{proof}
	The implication $(1)\Rightarrow (2)$ is immediate since
automorphisms preserve all parameter-free definable relations, and this
transfers to ultrapowers. For the converse, the criterion is due to
András Simon; a proof is given in \cite[Theorem~6.12]{MSS26}.
\end{proof}

\subsection{Application to concepts}

The following is the form in which we will invoke Simon's criterion
throughout.

\begin{corollary}[Definability from ultrapower invariance]
\label{cor:3.4}
Let
$\G$ be a coordinate geometry on $F^d$ in language $L_{\G}$ and let
$R\subseteq (F^d)^n$ be any $n$-ary relation. Assume that for every
ultrafilter $U$ and every $\alpha\in \mathrm{Aut}(\G(\F^U))$,
\[
\alpha(R^U)=R^U.
\] Then $R\in \mathrm{Conc}_n(\G)$.
\end{corollary}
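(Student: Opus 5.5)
The plan is to apply Theorem~\ref{thm:3.2} with $\M:=\G$, $L:=L_{\G}$, $M:=F^d$ and the given relation $R\subseteq (F^d)^n$. Since $\mathrm{Conc}_n(\G)$ is by definition the set of $n$-ary relations on $F^d$ that are parameter-free $L_{\G}$-definable in $\G$, the implication $(2)\Rightarrow(1)$ of Theorem~\ref{thm:3.2} gives $R\in\mathrm{Conc}_n(\G)$ directly. What has to be checked is that the hypothesis of the corollary really is condition~(2) of Theorem~\ref{thm:3.2} for this choice of $\M$. The issue is that the corollary speaks about $\G(\F^U)$ and the relation $R^U$ on $(F^U)^d$, whereas Theorem~\ref{thm:3.2} speaks about the abstract ultrapower $\G^U$ with universe $(F^d)^U$ and the relation $R^U$ defined there via $U$-many coordinates.

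The key step is therefore to identify these objects. I will use the canonical bijection
\[
\iota\colon (F^d)^U\to (F^U)^d,\qquad [\,i\mapsto (a_1(i),\dots,a_d(i))\,]_U\ \mapsto\ ([a_1]_U,\dots,[a_d]_U),
\]
and show that it is an isomorphism of $L_{\G}$-structures $\G^U\cong\G(\F^U)$. For each primitive $P_j$, defined in $\F$ by a parameter-free formula $\psi_j$, Łoś's theorem applied once in $\G$ and once in $\F$ gives
\[
(\iota\bar a)\in P_j^{\G(\F^U)}\iff \F^U\models\psi_j(\iota\bar a)\iff\{i:\F\models\psi_j(\bar a(i))\}\in U\iff \bar a\in P_j^{\G^U}.
\]
This is exactly the remark in Section~\ref{sec:3.2}. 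The same computation applies to $\mathsf K$, because $\mathsf{Bw}$ or $\mathsf{Col}$ is parameter-free definable in the relevant reduct, and Definition~\ref{def:3.1} interprets $\mathsf K^U$ by that same definition over $\F^U$. Finally, $\iota$ carries the Simon-style relation $R^U$ on $(F^d)^U$ onto the relation $R^U$ on $(F^U)^d$ used in the corollary, since Section~\ref{sec:3.2} defines the latter by transport along this very identification.

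Now let $U$ be any ultrafilter and $\beta\in\mathrm{Aut}(\G^U)$. Then $\alpha:=\iota\beta\iota^{-1}$ lies in $\mathrm{Aut}(\G(\F^U))$, so the hypothesis gives $\alpha(R^U)=R^U$, and conjugating back gives $\beta(R^U)=R^U$. Thus condition~(2) of Theorem~\ref{thm:3.2} holds, and $R$ is parameter-free $L_{\G}$-definable in $\G$, i.e.\ $R\in\mathrm{Conc}_n(\G)$.

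I expect no serious obstacle. The only delicate point is the bookkeeping in checking that $\iota$ respects every primitive, including $\mathsf K$, and that it respects $R^U$ for an arbitrary, possibly non-definable, $R$. The former holds because every primitive is parameter-free $\F$-definable, so Łoś's theorem applies in $\F$. The latter holds by the way $R^U$ was defined by transport in Section~\ref{sec:3.2}.
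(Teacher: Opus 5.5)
Your proposal is correct and follows the paper's proof: both apply Theorem~\ref{thm:3.2} to $\G$ itself and pass the hypothesis across the isomorphism $\G^U\cong\G(\F^U)$. The paper asserts that isomorphism in one line, whereas you check it explicitly via the canonical map $\iota$. Your check uses \L{}o\'s's theorem for the $\F$-definable primitives (including $\mathsf K$), the transport definition of $R^U$ from Section~\ref{sec:3.2}, and conjugation of automorphisms.
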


\begin{proof}
	Apply Theorem \ref{thm:3.2} with $A:=G$. Since
$\G^U\cong \G(\F^U)$, the hypothesis implies that every
automorphism of $\G^U$ preserves $R^U$. Therefore $R$ is
parameter-free definable in $\G$.
\end{proof}

\section{Baseline: Madarász's theorem and the proof mechanism}
\label{sec:4}

This section records the original representation theorem and isolates
the proof ingredients that will be re-used. We emphasise how the
affine-geometric input enters: it provides a semilinear factorisation of
automorphisms in ultrapowers into an affine part and a componentwise
base-field automorphism part. We will state the relevant
affine-geometric theorem precisely later, when it is first invoked
formally.

\subsection{FFD coordinate geometries and Madar\'asz's theorem}
\label{sec:4.1}

The fundamental result we wish to generalise is Theorem 4.2 of \cite{MSS26}, which states:

\begin{theorem}[Madar\'asz]
	\label{thm:mss26}
	Let $\mathcal{F}$ be an ordered field or a field that has more than two elements, let $\mathcal{G}$
	be an FFD coordinate geometry over $\mathcal{F}$, and let $\mathsf{R}$ be a relation of points of $F^d$. Then the
	following statements are equivalent:
	\begin{itemize}
		\item[(i)] $\mathsf{R}$ is a concept of $\mathcal{G}$ (i.e. $\mathsf{R}$ is definable in $\mathcal{G}$).
		\item[(ii)] $\mathsf{R}$ is definable over $\mathcal{F}$ and is closed under automorphisms of $\mathcal{G}$.
		\item[(iii)] $\mathsf{R}$ is definable over $\mathcal{F}$ and is closed under affine automorphisms of $\mathcal{G}$.
	\end{itemize}
\end{theorem}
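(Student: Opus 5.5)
The plan is to prove the cycle (i)$\Rightarrow$(ii)$\Rightarrow$(iii)$\Rightarrow$(i). The first two implications are routine; the substance is in (iii)$\Rightarrow$(i), which will go through Corollary~\ref{cor:3.4}.

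For (i)$\Rightarrow$(ii), take a parameter-free $L_{\G}$-formula defining $\mathsf R$. Each primitive of $\G$ is parameter-free $\F$-definable, including $\mathsf K$ (Section~\ref{sec:2.2}). Replacing each primitive by its $\F$-definition, and each point variable by a $d$-tuple of field variables as in Remark~\ref{rem:2.3}, gives an $\F$-formula defining $\mathsf R$. Closure of $\mathsf R$ under $\mathrm{Aut}(\G)$ holds because automorphisms preserve parameter-free definable relations. The step (ii)$\Rightarrow$(iii) is immediate, since $\mathrm{AffAut}(\G)\le\mathrm{Aut}(\G)$.

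For (iii)$\Rightarrow$(i), fix an ultrafilter $U$ and $\alpha\in\mathrm{Aut}(\G(\F^U))$. By Corollary~\ref{cor:3.4} it suffices to show $\alpha(\mathsf R^U)=\mathsf R^U$. The argument has three steps.

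\textbf{Step 1: factor $\alpha$.} Since $\alpha$ preserves $\mathsf K^U$, it preserves collinearity; in the ordered case betweenness defines collinearity. Since $d\ge 2$, and $|F^U|>2$ in the field case, the Fundamental Theorem of Affine Geometry applies to $\alpha$. It shows that $\alpha$ is semilinear: $\alpha(x)=A\sigma(x)+b$, where $\sigma$ is a field automorphism of $F^U$ acting componentwise. In the ordered case $\sigma$ is moreover order-preserving, because $\alpha$ preserves $\mathsf{Bw}$. Write $\beta(x)=Ax+b$, so that $\alpha=\beta\circ\hat\sigma$.

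\textbf{Step 2: both factors are automorphisms.} Because $\G$ is FFD, the primitives $P_i^U$ are defined by formulas in the pure (ordered-)field language, and $\sigma$ is an automorphism of that reduct. Hence $\hat\sigma$ preserves every $P_i^U$ and $\mathsf K^U$, so $\hat\sigma\in\mathrm{Aut}(\G(\F^U))$. It follows that $\beta=\alpha\circ\hat\sigma^{-1}\in\mathrm{AffAut}(\G(\F^U))$.

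\textbf{Step 3: both factors preserve $\mathsf R^U$.} Let $\varphi$ be the $\F$-formula defining $\mathsf R$. For $\hat\sigma$ this is easy only when $\varphi$ lives in the field reduct, which is what FFD gives in \cite{MSS26}: $\mathsf R$ is definable over $\mathcal F$ in the pure field sense, so $\sigma$ preserves $\varphi$ in $\F^U$.

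For $\beta$, the plan is to move the hypothesis on $\mathsf R$ up to $\F^U$ as a first-order statement. I expect this to be the main obstacle. Encode affine maps by their $d^2+d$ matrix and translation entries. The statement
\[
\forall A,b\,\Big(\big(\det A\ne0\wedge \textstyle\bigwedge_i \forall\bar x\,(P_i(\bar x)\leftrightarrow P_i(A\bar x+b))\big)\rightarrow\forall\bar x\,(\varphi(\bar x)\leftrightarrow\varphi(A\bar x+b))\Big)
\]
is a parameter-free sentence in the language of $\F$. Preservation of $\mathsf K$ is automatic for affine maps, so it need not be listed among the conditions.

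By (iii) this sentence is true in $\F$. By Łoś's theorem it is true in $\F^U$, and hence $\beta$ preserves $\mathsf R^U$. The point is that one cannot argue pointwise with $\beta$, since $\beta$ need not be an ultraproduct of affine automorphisms of $\G$. Uniform first-order coding of the affine automorphism group is exactly what makes the transfer legitimate.

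Combining Steps 2 and 3, $\alpha=\beta\circ\hat\sigma$ preserves $\mathsf R^U$. Corollary~\ref{cor:3.4} then yields (i).
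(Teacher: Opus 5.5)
Your proposal is correct and takes essentially the same route the paper records for this theorem, which it cites from \cite{MSS26} and whose mechanism it lays out in Section~\ref{sec:4.2}: transfer of affine invariance by a first-order sentence (Lemma~\ref{lem:4.3}), the FTAG-based factorisation $\alpha=A\circ\widetilde\sigma$ (Proposition~\ref{prop:4.4} and Lemma~\ref{lem:4.5}), preservation by the residual pure-field automorphism (Lemma~\ref{lem:4.6}), and the return to definability via Corollary~\ref{cor:3.4}. Your small deviations are harmless: you drop the $\mathsf K$-conjunct from the transferred sentence because affine maps automatically preserve $\mathsf{Bw}$ and $\mathsf{Col}$, and you check order-preservation of $\sigma$ directly.
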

\begin{proof}This is \cite[Theorem~4.2]{MSS26}.\end{proof}
In this theorem, $F$ is the universe of $\mathcal{F}$, and the dimension $d$ is a fixed integer satisfying $d \ge 2$. Saying that $\mathcal{G}$ is \emph{field-definable} means that all the relations of $\mathcal{G}$ are definable over $\mathcal{F}$; it is \emph{finitely field definable } (FFD) iff it is field-definable and contains only finitely many relations.

The two parts of the FFD hypothesis play different roles in the proof.
Field-definability ensures that the residual field-automorphism
component preserves every primitive relation, and indeed every relation
definable in the pure field or ordered-field reduct. Finiteness of the
relational language is used separately: it allows the assertion, ``an
affine map preserves all primitives,'' to be written as a single
first-order sentence and hence transferred to ultrapowers.

In this paper we are specifically interested in affine automorphisms, so the form in which
we will use Theorem~\ref{thm:mss26} is:

\begin{corollary}
\label{thm:4.1}
Assume $\F$ expands an ordered field, or a field
with more than two elements. Let $\G$ be an FFD coordinate geometry on $F^d$ with
$d\ge 2$. Let $R\subseteq (F^d)^n$ be field-definable
(parameter-free in the pure field/ordered-field language). Then \[
R\in \mathrm{Conc}(\G)\ \Longleftrightarrow\ \big(\text{$R$ is preserved by }\mathrm{AffAut}(\G)\Big).
\]
\end{corollary}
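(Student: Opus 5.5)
This corollary follows from Theorem~\ref{thm:mss26} by specialisation; the only work is checking that its hypotheses hold in our setting. I would first reconcile the two setups. The corollary lets $\F$ be an \emph{expansion} of a field or ordered field, while Theorem~\ref{thm:mss26} is stated over the pure field or ordered field. So I pass to the reduct $\F_0$ of $\F$ to the field (respectively ordered-field) language. The FFD hypothesis says that every primitive of $\G$ is definable in that pure language. Hence $\G$ is an FFD coordinate geometry over $\F_0$, and the hypothesis that $R$ is field-definable says that $R$ is definable over $\F_0$.

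The cardinality hypotheses also carry over. In the field case $|F|>2$ is assumed outright. In the ordered-field case $F$ is infinite.

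By Remark~\ref{rem:2.4}, naming $\mathsf K$ as a primitive changes neither $\mathrm{Conc}(\G)$ nor $\mathrm{Aut}(\G)$. Therefore $\mathrm{AffAut}(\G)$ is the same whether we use our presentation or that of \cite{MSS26}.

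Next I would apply the equivalence (i)$\Leftrightarrow$(iii) of Theorem~\ref{thm:mss26} to $\F_0$, $\G$ and $R$. Statement (i) is exactly $R\in\mathrm{Conc}(\G)$. Statement (iii) is the conjunction ``$R$ is definable over $\F_0$ and $R$ is closed under $\mathrm{AffAut}(\G)$''. Its first conjunct holds by hypothesis, so (iii) reduces to closure under $\mathrm{AffAut}(\G)$.

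It remains to match ``closed under'' with ``preserved by'' in the sense of Section~\ref{sec:2.6}. Suppose $\alpha(R)\subseteq R$ for every $\alpha$ in the group $\mathrm{AffAut}(\G)$. Applying this to $\alpha^{-1}$ gives $\alpha^{-1}(R)\subseteq R$, that is, $R\subseteq\alpha(R)$. So $\alpha(R)=R$ for every $\alpha$. The converse inclusion is trivial. This gives the displayed equivalence.

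I expect no real obstacle. The only point needing care is the bookkeeping in the first step. Definability ``over $\F$'' in \cite{MSS26} must be read in the pure field or ordered-field language, not in the language of the expansion $\F$. The corollary's hypotheses are phrased so that this reading applies.
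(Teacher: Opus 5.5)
Your proposal is correct and takes the same route as the paper, whose proof simply cites (iii)$\Rightarrow$(i) of Theorem~\ref{thm:mss26} together with the easy converse. Your extra bookkeeping makes explicit what the paper's one-line proof leaves implicit: passing to the pure reduct, invoking Remark~\ref{rem:2.4}, and using closure of $\mathrm{AffAut}(\G)$ under inverses to identify ``closed under'' with ``preserved by''.
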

\begin{proof}
	This is implication (iii)$\Rightarrow$(i) in Theorem
\ref{thm:mss26}, together with the easy direction
(i)$\Rightarrow$(iii).
\end{proof}

\label{sec:4.2}

The same framework yields a useful comparison theorem.

\begin{theorem}[{\cite[Theorem~4.5(ii)]{MSS26}}]
\label{thm:4.2}
Assume that $\mathcal{F}$ is an ordered field or a field with
more than two elements, and that $\mathcal{G}$ and $\mathcal{G}'$ are
FFD coordinate geometries over $\mathcal{F}$. Then 
\[
\mathrm{Conc}(\mathcal{G}) \subseteq \mathrm{Conc}(\mathcal{G}')
    \Longleftrightarrow
\mathrm{AffAut}(\mathcal{G}') \subseteq \mathrm{AffAut}(\mathcal{G}).
\]
$\hfill \Box$
\end{theorem}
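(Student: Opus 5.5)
Both directions reduce to Corollary~\ref{thm:4.1}. The key observation is that every concept of an FFD geometry is itself field-definable, so the corollary applies to it.

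\emph{($\Rightarrow$)} Assume $\mathrm{Conc}(\G)\subseteq\mathrm{Conc}(\G')$ and let $\alpha\in\mathrm{AffAut}(\G')$. We must show that $\alpha$ preserves $\mathsf K$ and each primitive $P_i$ of $\G$. Each such primitive is trivially a concept of $\G$, hence a concept of $\G'$. Automorphisms of $\G'$ preserve all parameter-free $L_{\G'}$-definable relations, so $\alpha(P_i)=P_i$ and $\alpha(\mathsf K)=\mathsf K$. Thus $\alpha\in\mathrm{Aut}(\G)$. Since $\alpha$ is also affine, $\alpha\in\mathrm{AffAut}(\G)$. This direction uses neither field-definability nor ultrapowers.

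\emph{($\Leftarrow$)} Assume $\mathrm{AffAut}(\G')\subseteq\mathrm{AffAut}(\G)$ and let $R\in\mathrm{Conc}(\G)$, defined by a parameter-free $L_{\G}$-formula $\varphi$. First, $R$ is field-definable: substitute for each primitive of $\G$ (including $\mathsf K$) its defining parameter-free formula in the pure field or ordered-field language, and expand each point variable into $d$ field variables as in Remark~\ref{rem:2.3}. This substitution is legitimate because $\G$ is FFD. Second, $R$ is preserved by $\mathrm{AffAut}(\G)$, since every automorphism of $\G$ preserves its concepts. By hypothesis $R$ is then preserved by the smaller group $\mathrm{AffAut}(\G')$. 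Corollary~\ref{thm:4.1}, applied to the FFD geometry $\G'$ and the field-definable relation $R$, now gives $R\in\mathrm{Conc}(\G')$.

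The only step needing care is the translation of $L_{\G}$-definability into pure field-definability. Quantifiers over points must become blocks of $d$ field quantifiers, and each atomic formula $P_i(\bar x)$ must be replaced by its field definition. This is routine, but it is exactly where the FFD hypothesis on $\G$ is used; the FFD hypothesis on $\G'$ is used only through Corollary~\ref{thm:4.1}. All the ultrapower machinery is hidden inside that corollary.
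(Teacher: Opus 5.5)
Your proof is correct and takes the same route as the paper. The paper only cites this statement from \cite{MSS26}, but it proves its generalisation, Theorem~\ref{thm:7.2}, by exactly your argument: Lemma~\ref{lem:7.1} plays the role of your substitution step, and Theorem~\ref{thm:6.1} stands where you use Corollary~\ref{thm:4.1}.
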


\subsection{The proof mechanism}
\label{sec:4.2}

The proof operates by combining various observations, as expressed in the following lemmas.

\begin{lemma}[First-order expressibility and transfer of affine
preservation]
\label{lem:4.3}
Let $\G$ be a finite-relational coordinate geometry and
let $R\subseteq (F^d)^n$ be a parameter-free $\F$-definable
relation. The statement
\begin{center}
\emph{every affine automorphism of $\G$ preserves $R$}
\end{center}
is expressible by a first-order sentence in the language of
$\F$ (uniformly in formulas defining the primitives of $\G$
and defining $R$), and hence transfers to ultrapowers.
\end{lemma}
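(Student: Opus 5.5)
The plan is to write down the sentence explicitly by quantifying over the finitely many coefficients of an affine map, and then invoke Łoś's theorem. An affine map $x\mapsto Ax+b$ on $F^d$ is determined by $d^2+d$ field elements: the entries $a_{jk}$ of $A$ and the components $b_j$ of $b$. So I will quantify over a tuple $\bar y=(a_{11},\dots,a_{dd},b_1,\dots,b_d)$ of $d^2+d$ object variables. Write $\mathrm{Inv}(\bar y)$ for the field-language formula $\det(A)\neq 0$, where $\det$ is a fixed polynomial in the entries. For $x\in F^d$ write $\tau_{\bar y}(x)$ for the $d$-tuple of terms $\bigl(\sum_k a_{jk}x_k+b_j\bigr)_{j\le d}$, following the convention of Remark~\ref{rem:2.3}. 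These are terms of the field reduct, hence terms of $\F$.

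Next, I express ``the map $\tau_{\bar y}$ is an automorphism of $\G$.'' When $\det A\neq0$, $\tau_{\bar y}$ is automatically a bijection of $F^d$, so it suffices to require that it preserves each primitive and its complement. Let $\psi_0,\psi_1,\dots,\psi_k$ be parameter-free $\F$-formulas defining $\mathsf K,P_1,\dots,P_k$; for $\mathsf K$ this is the formula for $\mathsf{Bw}$ or $\mathsf{Col}$ from Section~\ref{sec:2.2}. Put
\[
\mathrm{Aut}_{\G}(\bar y):=\mathrm{Inv}(\bar y)\wedge\bigwedge_{i=0}^{k}\forall \bar x\,\bigl(\psi_i(\bar x)\leftrightarrow\psi_i(\tau_{\bar y}(x_1),\dots,\tau_{\bar y}(x_{n_i}))\bigr).
\]
This is a finite conjunction precisely because $L_{\G}$ is finite; that is where finite-relationality is used. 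For the biconditional to capture ``automorphism'' rather than just ``embedding'' one needs surjectivity, which comes from invertibility of $A$. Similarly, with $\varphi$ defining $R$, ``$\tau_{\bar y}$ preserves $R$'' (i.e.\ $\tau(R)=R$) is captured by $\forall\bar x\,(\varphi(\bar x)\leftrightarrow\varphi(\tau_{\bar y}(\bar x)))$, again using bijectivity. The required sentence is then
\[
\sigma:=\forall\bar y\,\bigl(\mathrm{Aut}_{\G}(\bar y)\to\forall\bar x\,(\varphi(\bar x)\leftrightarrow\varphi(\tau_{\bar y}(\bar x)))\bigr),
\]
and $\F\models\sigma$ iff every element of $\mathrm{AffAut}(\G)$ preserves $R$. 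Its construction depends only on $d$, the $\psi_i$, and $\varphi$, which gives the uniformity claim.

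For transfer, Łoś's theorem gives $\F^U\models\sigma$. I then read $\sigma$ in $\F^U$ the same way. Tuples $\bar y$ in $F^U$ with $\det\neq0$ correspond exactly to elements of $\mathrm{Aff}((F^U)^d)$. By Definition~\ref{def:3.1}, the $\psi_i$ interpreted in $\F^U$ give exactly the primitives of $\G(\F^U)$; for $\mathsf K$ this holds since $\mathsf K^U$ is defined by the same $\mathsf{Bw}$/$\mathsf{Col}$ formula. Finally, $\varphi$ in $\F^U$ defines $R^U$ by Section~\ref{sec:3.2}. Hence every element of $\mathrm{AffAut}(\G(\F^U))$ preserves $R^U$.

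I expect the only delicate point to be this bookkeeping: that the same formulas define the ultrapower primitives, and that invertibility makes the biconditional formulation equivalent to preservation in the sense of Section~\ref{sec:2.6}. There is no genuine mathematical obstacle.
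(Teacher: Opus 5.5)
Your proposal is correct and matches the paper's proof. The paper likewise quantifies over the entries of a matrix $B$ and a vector $b$, and imposes $\det(B)\neq 0$ together with the finite conjunction of biconditionals for $\mathsf K,P_1,\dots,P_k$. It concludes $\forall\bar x\,(\varphi(\bar x)\leftrightarrow\varphi(T_{B,b}\bar x))$ and transfers the sentence by Łoś's theorem. Your extra bookkeeping only makes explicit what the paper calls ``immediate'': that invertibility makes the biconditionals equivalent to automorphism and preservation, and that the same formulas define the primitives of $\G(\F^U)$ and $R^U$.
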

\begin{proof}
	Choose parameter-free $\F$-formulas
$\kappa,\pi_1,\dots,\pi_k$ defining $\mathsf K,P_1,\dots,P_k$, and a
parameter-free formula $\varphi$ defining $R$. For a matrix tuple
$B\in M^{d\times d}$ and a vector $b\in F^d$, write
$T_{B,b}(x)=Bx+b$. The required assertion is expressed by the sentence
\[
\begin{aligned}
\forall B\,\forall b\,\big(&\det(B)\neq 0 \\
&{}\wedge \forall x_1x_2x_3\,
 \big(\kappa(x_1,x_2,x_3)\leftrightarrow \\
&\hspace{43mm}
 \kappa(T_{B,b}x_1,T_{B,b}x_2,T_{B,b}x_3)\Big) \\
&{}\wedge \bigwedge_{i=1}^k \forall \bar x\,
 \big(\pi_i(\bar x)\leftrightarrow\pi_i(T_{B,b}\bar x)\Big) \\
&{}\Longrightarrow \forall \bar x\,
 \big(\varphi(\bar x)\leftrightarrow
 \varphi(T_{B,b}\bar x)\Big)\Bigg).
\end{aligned}
\] Here $T_{B,b}\bar x$ means that $T_{B,b}$ is applied to each
point in the tuple. The finite conjunction over the primitives is
available because $\G$ is finite-relational. The determinant is a fixed
polynomial in the matrix entries and is therefore definable in the field
language. The displayed biconditionals say exactly that the coded affine
bijection is an automorphism of $\G$ and that it preserves $R$.
Transfer to ultrapowers is now an immediate application of Łoś's
theorem.
\end{proof}

\begin{proposition}[Semilinear decomposition for coordinate geometries]
	\label{prop:4.4}
Assume $F$ is an ordered
field, or a field with more than two elements, and let $d\ge 2$. Let \[
\G\ :=\ \langle F^d;\ \mathsf K,\ P_1,\dots,P_k\rangle
\] be a finite-relational coordinate geometry whose primitive relations
are parameter-free definable in the pure field/ordered-field language of
$F$. Equivalently, after forgetting that $\mathsf K$ has been named
as a primitive, this is an FFD coordinate geometry in the
sense of \cite{MSS26}. Then every automorphism
$\alpha\in\mathrm{Aut}(\G)$ admits a (unique) factorisation \[
\alpha\ =\ A\circ \widetilde\sigma,
\] where $A\in\mathrm{AffAut}(\G)$ and $\sigma\in\mathrm{Aut}(\F)$
(automorphisms taken in the pure field language, or in the pure
ordered-field language in the ordered case), and where
$\widetilde\sigma:F^d\to F^d$ acts componentwise.
\end{proposition}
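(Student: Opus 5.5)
The plan is to reduce the statement to the Fundamental Theorem of Affine Geometry (FTAG) and then use pure-field definability to move the residual field automorphism across. Let $\alpha\in\mathrm{Aut}(\G)$. Since $\mathsf K$ is a primitive of $\G$, $\alpha$ is a bijection of $F^d$ preserving $\mathsf K$ in both directions.

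\textbf{Step 1: $\alpha$ is a collineation.} In the field case, $\mathsf K=\mathsf{Col}$, so $\alpha$ maps lines onto lines. Lines are exactly the sets $\{c:\mathsf{Col}(a,b,c)\}$ with $a\neq b$, and $\alpha^{-1}$ also preserves $\mathsf{Col}$.

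In the ordered-field case, $\mathsf K=\mathsf{Bw}$. Collinearity is parameter-free definable from betweenness by
\[
\mathsf{Bw}(a,b,c)\vee\mathsf{Bw}(b,c,a)\vee\mathsf{Bw}(c,a,b),
\]
so $\alpha$ is again a collineation.

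Since $d\ge 2$, and in the field case $|F|>2$, FTAG applies. It gives $\alpha(x)=M\widetilde\sigma(x)+b$ for some $M\in\mathrm{GL}_d(F)$, $b\in F^d$ and $\sigma\in\mathrm{Aut}(\langle F;+,\cdot\rangle)$. Set $A(x):=Mx+b$, so $\alpha=A\circ\widetilde\sigma$.

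\textbf{Step 2: order preservation in the ordered case.} Here I must check that $\sigma$ preserves $<$. Both $\alpha$ and $A$ preserve $\mathsf{Bw}$: $\alpha$ by assumption, and $A$ because affine maps preserve betweenness. Hence $\widetilde\sigma=A^{-1}\circ\alpha$ preserves $\mathsf{Bw}$.

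Restrict to the first coordinate axis. For $s\ge 0$ we have $\mathsf{Bw}(-1,0,s)$ (along the axis), and this must transfer to $\mathsf{Bw}(-1,0,\sigma(s))$, which forces $\sigma(s)\ge0$. So $\sigma$ maps nonnegatives to nonnegatives and is order-preserving.

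This step, together with the exact form of FTAG we cite, is where I expect the only genuine care is needed. In particular, the classical FTAG concerns collineations of affine spaces of dimension $\ge2$, and we must make sure it covers all fields with more than two elements. Over $\mathbb F_3$ collinearity of three points is automatic-ish on lines, but lines are still determined, so the standard statement applies.

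\textbf{Step 3: $A\in\mathrm{AffAut}(\G)$.} The point is that $\widetilde\sigma$ preserves every relation defined by a parameter-free formula in the pure (ordered-)field language, because $\sigma$ is an automorphism of that reduct and componentwise application commutes with evaluating such formulas. By hypothesis this covers $\mathsf K$ and every $P_i$, so $\widetilde\sigma\in\mathrm{Aut}(\G)$. Therefore
\[
A=\alpha\circ\widetilde\sigma^{-1}\in\mathrm{Aut}(\G)\cap\mathrm{Aff}(F^d)=\mathrm{AffAut}(\G).
\]

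\textbf{Step 4: uniqueness.} Suppose $A\circ\widetilde\sigma=A'\circ\widetilde{\sigma'}$. Then $A'^{-1}A=\widetilde{\sigma'}\widetilde\sigma^{-1}$ is both affine and componentwise-semilinear. It fixes $0$, so its translation part vanishes. Evaluating at the points $t e_1$ and using linearity gives $\sigma'\sigma^{-1}(t)=t\cdot\sigma'\sigma^{-1}(1)=t$. Hence $\sigma=\sigma'$, and then $A=A'$.
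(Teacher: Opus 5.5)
Your proof is correct and takes essentially the same route as the paper: the paper's proof simply cites \cite[Proposition~6.1]{MSS26}, which derives the factorisation and its uniqueness from the Fundamental Theorem of Affine Geometry, and uses Remark~\ref{rem:2.4} to handle the naming of $\mathsf K$. You make that argument explicit: the collineation step, FTAG for $|F|\ge 3$, the betweenness check that $\sigma$ preserves $<$, residual preservation of the pure-field-definable primitives, and uniqueness. Because you work directly with $\mathsf K$ named as a primitive, you do not need Remark~\ref{rem:2.4}.
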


\begin{proof}
This is the finite-relational, $\mathsf K$-named special
case of \cite[Proposition~6.1]{MSS26}. The proposition in \cite{MSS26}
does not require finiteness and derives the factorisation, including
uniqueness, from the Fundamental Theorem of Affine Geometry. Remark \ref{rem:2.4}
explains why naming $\mathsf K$ does not change the automorphism
group.
\end{proof}

\begin{lemma}[Affine-geometric semilinear factorisation in
	FFD ultrapowers]
	\label{lem:4.5}
Assume $\F$ expands an
ordered field, or a field with more than two elements, and let $d\ge 2$. Let $\G$
be an FFD coordinate geometry on $F^d$ and let $U$ be an
ultrafilter. Then every 
\[
\alpha\in \mathrm{Aut}\big(\G(\F^U)\big)
\] admits a factorisation \[
\alpha\ =\ A\circ \widetilde\sigma
\] with $A\in\mathrm{AffAut}\big(\G(\F^U)\big)$ and
$\sigma\in\mathrm{Aut}(\F^U)$, where $F^U$ denotes the pure
field/ordered-field reduct of $\F^U$.
\end{lemma}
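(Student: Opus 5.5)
The plan is to apply Proposition~\ref{prop:4.4} directly to the ultrapower geometry $\G(\F^U)$, viewed as a coordinate geometry over the base structure $\F^U$. Since $\G$ is FFD, each primitive $P_i$ is defined in $\F$ by a parameter-free formula $\pi_i$ in the pure field/ordered-field language. By Definition~\ref{def:3.1}, $P_i^U$ is interpreted in $(F^U)^d$ by the same formula $\pi_i$, evaluated in $\F^U$. Because $\pi_i$ mentions only field (or ordered-field) symbols, its interpretation depends only on the pure reduct of $\F^U$. Likewise $\mathsf K^U$ is $\mathsf{Bw}$ or $\mathsf{Col}$ on $(F^U)^d$, which is also defined in that reduct.

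So the first step is to record that $\G(\F^U)=\langle (F^U)^d;\mathsf K^U,P_1^U,\dots,P_k^U\rangle$ is a finite-relational coordinate geometry of dimension $d\ge 2$ over the pure reduct of $\F^U$, with all primitives parameter-free definable in that reduct.

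The second step is to check the standing hypotheses of Proposition~\ref{prop:4.4} for this reduct.
\begin{itemize}
\item If $\F$ expands an ordered field, Łoś's theorem shows that the reduct of $\F^U$ is an ordered field, since the ordered-field axioms are first-order.
\item If $\F$ expands a field with more than two elements, the field axioms transfer in the same way. The sentence $\exists x\,(x\neq 0\wedge x\neq 1)$ also transfers, so $F^U$ again has more than two elements.
\end{itemize}
Proposition~\ref{prop:4.4} therefore applies to $\G(\F^U)$. It yields, for each $\alpha\in\mathrm{Aut}(\G(\F^U))$, a factorisation $\alpha=A\circ\widetilde\sigma$. Here $A$ is an affine automorphism of $\G(\F^U)$, i.e.\ $A\in\mathrm{Aff}((F^U)^d)\cap\mathrm{Aut}(\G(\F^U))=\mathrm{AffAut}(\G(\F^U))$. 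The map $\sigma$ is an automorphism of the pure field/ordered-field reduct of $\F^U$, which is exactly the statement of Lemma~\ref{lem:4.5}.

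The only point needing care, and the closest thing to an obstacle, is the observation that Proposition~\ref{prop:4.4} concerns the \emph{pure} reduct. It must be applied to the reduct of $\F^U$ rather than to the expanded structure $\F^U$, and the lemma deliberately claims no more than this. The substantive content, namely FTAG and the fact that the residual field automorphism preserves field-definable primitives, is already packaged in Proposition~\ref{prop:4.4} via \cite[Proposition~6.1]{MSS26}.

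I would also remark that the identification $(F^d)^U\cong(F^U)^d$ from Section~\ref{sec:3.2} is used implicitly, so that $\G(\F^U)$ is literally a coordinate geometry on $(F^U)^d$ in the sense of Section~\ref{sec:2.3}.
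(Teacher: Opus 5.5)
Your proposal is correct and follows the paper's proof essentially step for step: you note that the primitives of $\G(\F^U)$ are parameter-free definable in the pure (ordered-)field reduct of $\F^U$, use Łoś's theorem to see that this reduct is again an ordered field or a field with more than two elements, and apply Proposition~\ref{prop:4.4} to it. One step is left implicit: that $P_i^U$ does not depend on which defining $\F$-formula is used, so you may take the pure-field one. This is exactly the transfer remark of Section~\ref{sec:3.2} that the paper invokes.
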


\begin{proof}
	Because $\G$ is FFD, each primitive relation of
$\G(\F^U)$ is parameter-free definable in the pure
field/ordered-field reduct $\F^U$ (by transfer of parameter-free
definability). Thus $\G(\F^U)$ is a field-definable coordinate
geometry over $F^U$ in the sense of \cite{MSS26}. In the
unordered field case, $|F|>2$ implies $|F^U|>2$ by Łoś's theorem, so
Proposition \ref{prop:4.4} applies to $F^U$. In the ordered-field case,
automorphisms of $\F^U$ are taken in the pure ordered-field language
(so they preserve $<$). Applying Proposition \ref{prop:4.4} to
$\G(\F^U)$ yields the required factorisation, with
$A\in\mathrm{AffAut}\big(\G(\F^U)\big)$.
\end{proof}

\begin{lemma}[Residual preservation in the FFD setting]
	\label{lem:4.6}
Assume
$\F$ expands an ordered field, or a field with more than two elements, and
let $d\ge 2$. Let $\G$ be an FFD coordinate geometry on $F^d$, let
$U$ be an ultrafilter, and let $\sigma\in\mathrm{Aut}(\F^U)$. Then
the componentwise map $\widetilde\sigma:(F^U)^d\to(F^U)^d$ preserves
every relation that is parameter-free definable in $F^U$ (in
particular, each primitive relation of $\G(\F^U)$ and every
field-definable relation on $(F^U)^d$).
\end{lemma}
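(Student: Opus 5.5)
The plan is to reduce the statement to the standard fact that an automorphism of a first-order structure preserves every relation which is parameter-free definable in that same structure, and then to unwind the identification $((F^U)^d)^n\cong (F^U)^{dn}$ of Section~\ref{sec:2.1}. Throughout I read $\sigma$ as an automorphism of the structure with universe $F^U$ named in the statement, and ``parameter-free definable in $F^U$'' as definable in the language of that same structure. Nothing is claimed about symbols outside the language that $\sigma$ respects. The argument never uses which language this is, only that $\sigma$ respects it, so it works uniformly for the pure reduct and for the full expanded structure.

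\emph{Step 1 (one-sorted invariance).} I will prove by induction on a formula $\psi(y_1,\dots,y_m)$ in the language of $\F^U$ that for all $c_1,\dots,c_m\in F^U$,
\[
\F^U\models\psi(c_1,\dots,c_m)\iff \F^U\models\psi(\sigma c_1,\dots,\sigma c_m).
\]
For atomic formulas this holds because $\sigma$ commutes with the interpretations of all function symbols and constants (so $\sigma$ commutes with term evaluation) and respects all predicate symbols and equality. The Boolean cases are immediate. For the quantifier case, surjectivity of $\sigma$ lets any witness $e$ for $\psi(\sigma\bar c)$ be written as $e=\sigma e'$, and then $e'$ is a witness for $\psi(\bar c)$. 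The converse direction needs no surjectivity.

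\emph{Step 2 (componentwise transfer).} Let $R\subseteq((F^U)^d)^n$ be defined by $\varphi(\bar x)$. By Remark~\ref{rem:2.3}, $\varphi$ is literally a formula in $dn$ object variables. Under the identification, $\widetilde\sigma$ applied pointwise to a tuple $(a_1,\dots,a_n)$ of points is exactly $\sigma$ applied to each of the $dn$ coordinates. Step~1 therefore gives $\bar a\in R\iff \widetilde\sigma\bar a\in R$. This yields $\widetilde\sigma(R)\subseteq R$, and since the equivalence holds in both directions, $\widetilde\sigma^{-1}(R)\subseteq R$ as well. Hence $\widetilde\sigma(R)=R$.

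\emph{Step 3 (the parenthetical cases).} The interpretations $\mathsf{Bw}$ and $\mathsf{Col}$ are given by field, respectively ordered-field, formulas. Each $P_i^U$ is, by Definition~\ref{def:3.1} and the FFD hypothesis, given by a formula of the pure field or ordered-field language. Every such formula is in particular a formula of the language of $F^U$, since the language of the reduct is contained in the language of any expansion. So Step~2 applies to all primitives and to every field-definable relation on $(F^U)^d$.

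The only point needing care is in the ordered case: $\sigma$ must preserve $<$, which is needed for $\mathsf{Bw}$. This is guaranteed by the convention fixed in Lemma~\ref{lem:4.5}, under which automorphisms are taken in the ordered-field language. I expect no genuine obstacle beyond this bookkeeping; the substance is the routine induction in Step~1.
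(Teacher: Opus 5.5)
Your proof is correct and takes the same route as the paper: an automorphism preserves every parameter-free definable relation, and this is applied coordinatewise through the identification $((F^U)^d)^n\cong (F^U)^{dn}$. The difference is only in detail. You prove the invariance fact by induction on formulas and state explicitly which language $\sigma$ respects, whereas the paper's two-line proof takes both points for granted.
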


\begin{proof}
	The map $\sigma$ is an automorphism of $F^U$, so it
preserves all parameter-free definable relations on $\F^U$ in the pure
field/ordered-field language. Applying this coordinatewise yields the
stated preservation on $(F^U)^d$.
\end{proof}

\section{Expansions and semilinear faithfulness}
\label{sec:5}

\subsection{Why expanded-base definability alone is not enough}
\label{sec:5.1}

This section isolates the point at which the baseline proof mechanism
fails if the coordinate field is expanded by additional structure.

Let $\F$ be an expansion of an ordered field (or field), and
let \[
\G=\langle F^d;\ \mathsf K,\ P_1,\dots,P_k\rangle
\] be a finite-relational coordinate geometry whose primitive relations
are parameter-free $\F$-definable.

Suppose $R\subseteq (F^d)^n$ is parameter-free
$\F$-definable and is preserved by $\mathrm{AffAut}(\G)$. The
proof strategy of Section \ref{sec:4} begins by transferring affine preservation to
ultrapowers, and then tries to show that an arbitrary ultrapower
automorphism preserves $R^U$ by factoring it into an affine part and a
``field automorphism part''.

The obstruction is that, in expansions, the residual field automorphism
produced by the affine-geometric factorisation need not be an
automorphism of the expanded structure $\F^U$. In particular,
it may fail to preserve additional primitives of $\F$ (named
constants, predicates, functions), and therefore it may fail to preserve
$\F$-definable relations such as $R$.

There is a second warning sign. If $\mathrm{AffAut}(\G)$ is trivial,
then every relation is invariant under it, so the affine-invariance
condition imposes no restriction. The desired equivalence can then hold
only if every parameter-free relation definable in the ambient expansion
is already a concept of $\G$. This can occur, as the exponential
example in Section \ref{sec:8.2} shows, but it requires the geometry to recover enough of
the expanded base structure; triviality of the affine group by itself
provides no such conclusion.

Accordingly, if the mechanism of \cite{MSS26} is to survive in expansions, one
must add a hypothesis ensuring that the residual semilinear component is
controlled at the level of the expanded base structure, in every
ultrapower.

\label{sec:5.2}


\begin{definition}[Semilinear faithfulness over $\F$]
	\label{def:5.1}
Let $\F$ expand an ordered field (or a field), let $d\ge 2$,
and let $\G=\langle F^d;\ \mathsf K,\ P_1,\dots,P_k\rangle$ be a
finite-relational, parameter-free $\F$-definable coordinate
geometry. We say that $\G$ is \emph{semilinearly faithful over
$\F$} if for every ultrafilter $U$ and every automorphism \[
\alpha\in \mathrm{Aut}\big(\G(\F^U)\big),
\] there exist \[
A\in \mathrm{AffAut}\big(\G(\F^U)\big)\quad\text{and}\quad \sigma\in \mathrm{Aut}(\F^U)
\] such that \[
\alpha = A\circ \widetilde\sigma,
\] where $\widetilde\sigma:(F^U)^d\to (F^U)^d$ acts componentwise: \[
\widetilde\sigma\big((x_1,\dots,x_d)\big) := (\sigma(x_1),\dots,\sigma(x_d)).
\]
\end{definition}


\section{Main theorem: Semilinear Erlangen characterisation}
\label{sec:6}

\begin{theorem}[Semilinear Erlangen characterisation]
	\label{thm:6.1}
Let
$\F$ expand an ordered field (or a field with more than two elements), let
$d\ge 2$, and let $G=\langle F^d;\ \mathsf K,\ P_1,\dots,P_k\rangle$
be a finite-relational, parameter-free $\F$-definable
coordinate geometry. Assume that $\G$ is semilinearly faithful over
$\F$. Let $R\subseteq (F^d)^n$ be any parameter-free
$\F$-definable relation. Then \[
R\in \mathrm{Conc}(\G)\ \Longleftrightarrow\ \big(\text{$R$ is preserved by }\mathrm{AffAut}(\G)\Big).
\]
\end{theorem}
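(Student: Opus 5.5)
The forward direction is routine. If $R\in\mathrm{Conc}(\G)$, then $R$ is defined by a parameter-free $L_{\G}$-formula. Every automorphism of $\G$ preserves every parameter-free $L_{\G}$-definable relation, and in particular so does every element of $\mathrm{AffAut}(\G)\le\mathrm{Aut}(\G)$. This direction uses neither semilinear faithfulness nor the $\F$-definability of $R$.

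For the converse, I will follow the three-step mechanism of Section~\ref{sec:4} and end with Corollary~\ref{cor:3.4}. Assume $R$ is parameter-free $\F$-definable, say by $\varphi(\bar x)$, and is preserved by $\mathrm{AffAut}(\G)$. Fix an arbitrary ultrafilter $U$ and an arbitrary $\alpha\in\mathrm{Aut}(\G(\F^U))$. The goal is $\alpha(R^U)=R^U$, after which Corollary~\ref{cor:3.4} gives $R\in\mathrm{Conc}_n(\G)$.

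\textbf{Step 1 (transfer).} By Lemma~\ref{lem:4.3}, the statement ``every affine automorphism of $\G$ preserves $R$'' is a single first-order sentence $\theta$ in the language of $\F$. This uses finiteness of the language and parameter-free definitions $\kappa,\pi_i,\varphi$. By hypothesis $\F\models\theta$, so Łoś's theorem gives $\F^U\models\theta$. Read in $\F^U$, $\theta$ says that every affine automorphism of the structure whose primitives are defined by $\kappa,\pi_i$ preserves the relation defined by $\varphi$. By Definition~\ref{def:3.1} and Section~\ref{sec:3.2}, that structure is exactly $\G(\F^U)$ and the relation is $R^U$. Hence every $A\in\mathrm{AffAut}(\G(\F^U))$ satisfies $A(R^U)=R^U$. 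One point needs checking here: affine maps over $F^U$ are coded by matrix and vector tuples from $F^U$, and $\det\neq0$ is equivalent to invertibility. Both are immediate in the field language.

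\textbf{Step 2 (factorisation).} By semilinear faithfulness (Definition~\ref{def:5.1}), write $\alpha=A\circ\widetilde\sigma$ with $A\in\mathrm{AffAut}(\G(\F^U))$ and $\sigma\in\mathrm{Aut}(\F^U)$, where $\sigma$ is an automorphism of the full expanded structure.

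\textbf{Step 3 (residual preservation).} Since $\sigma$ preserves every parameter-free $\F^U$-definable relation on $F^U$, applying it componentwise, as in Lemma~\ref{lem:4.6} but now in the expanded language, shows that $\widetilde\sigma$ preserves $R^U$. Indeed, $\bar a\in R^U$ iff $\F^U\models\varphi(\bar a)$ iff $\F^U\models\varphi(\sigma\bar a)$, and $\sigma\bar a$ read as points is $\widetilde\sigma\bar a$. Then $\alpha(R^U)=A(\widetilde\sigma(R^U))=A(R^U)=R^U$.

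The main obstacle, in the sense of the only place where the new hypothesis does real work, is Step~3. In the FFD setting the residual automorphism came for free from Proposition~\ref{prop:4.4}, but only as an automorphism of the pure field reduct, which need not preserve $\varphi$. Semilinear faithfulness supplies $\sigma\in\mathrm{Aut}(\F^U)$ in the expanded language, which is exactly what is needed. The remaining care is bookkeeping: check that the identification $(F^d)^U\cong(F^U)^d$ makes the ultrapower relation $R^U$ used in Corollary~\ref{cor:3.4} coincide with the formula-defined $R^U$ in Steps~1 and~3. Section~\ref{sec:3.2} records precisely this agreement.
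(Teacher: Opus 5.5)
Your proof is correct and follows the paper's argument step for step. The forward direction is trivial. For the converse, affine invariance transfers to every ultrapower by Lemma~\ref{lem:4.3}, semilinear faithfulness factorises each $\alpha$ as $A\circ\widetilde\sigma$, $\widetilde\sigma$ preserves $R^U$ because $\sigma\in\mathrm{Aut}(\F^U)$ in the expanded language, and Corollary~\ref{cor:3.4} returns definability. Your extra bookkeeping (invertibility via $\det\neq 0$, and the agreement of the two readings of $R^U$ recorded in Section~\ref{sec:3.2}) makes explicit points the paper leaves implicit, without changing the route.
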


\begin{proof}
($\Rightarrow$) If $R\in \mathrm{Conc}(\G)$ then $R$
is preserved by every automorphism of $\G$, hence by every affine
automorphism.

($\Leftarrow$) Assume $R$ is preserved by $\mathrm{AffAut}(\G)$. By
Lemma \ref{lem:4.3}, the statement that every affine automorphism preserves $R$
transfers to every ultrapower: for every ultrafilter $U$, the relation
$R^U$ is preserved by $\mathrm{AffAut}(\G(\F^U))$.

Fix an ultrafilter $U$ and let
$\alpha\in \mathrm{Aut}(\G(\F^U))$ be arbitrary. By semilinear
faithfulness, write $\alpha=A\circ \widetilde\sigma$ with
$A\in \mathrm{AffAut}(\G(\F^U))$ and
$\sigma\in \mathrm{Aut}(\F^U)$. Then $A$ preserves $R^U$
by the transferred affine-invariance hypothesis. Also
$\widetilde\sigma$ preserves $R^U$ because $R^U$ is parameter-free
definable in $\F^U$ and
$\sigma\in \mathrm{Aut}(\F^U)$. Hence $\alpha(R^U)=R^U$.

Since $U$ and $\alpha$ were arbitrary, every automorphism of every
ultrapower preserves $R^U$. By Corollary \ref{cor:3.4},
$R\in \mathrm{Conc}(\G)$.
\end{proof}

\begin{remark}[Role of the field and dimension assumptions]
	\label{rem:6.2}
Once
semilinear faithfulness is assumed, the proof of Theorem \ref{thm:6.1} itself does
not use the Fundamental Theorem of Affine Geometry, the restriction
$|F|>2$, or the inequality $d\ge 2$. Those assumptions belong to the
uniform coordinate-geometric framework and are needed when semilinear
faithfulness is established from affine geometry, as in the baseline
theorem and the examples below. The abstract implication of Theorem \ref{thm:6.1}
remains valid whenever Definition \ref{def:5.1} and Lemma \ref{lem:4.3} make sense.
\end{remark}

\label{sec:7}

The Erlangen-style contravariance between definability and symmetry
persists under the generalized hypothesis.

\begin{lemma}[Concepts are ambient definable]
	\label{lem:7.1}
Let $\F$
expand a field or ordered field and let
$G=\langle F^d;\ \mathsf K,\ P_1,\dots,P_k\rangle$ be a coordinate
geometry whose primitives are parameter-free $\F$-definable.
Then every concept of $\G$ is parameter-free $\F$-definable.
\end{lemma}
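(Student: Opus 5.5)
The plan is to translate $L_{\G}$-formulas into $\F$-formulas by induction on formula structure. Throughout we use the tuple-metavariable convention of Remark~\ref{rem:2.3}: a point variable $x$ ranging over $F^d$ stands for a $d$-tuple $(x^1,\dots,x^d)$ of object variables ranging over $F$. Fix parameter-free $\F$-formulas $\kappa(x_1,x_2,x_3)$ and $\pi_i(\bar x)$ defining $\mathsf K$ and $P_i$. The existence of $\kappa$ is noted in Section~\ref{sec:2.2}, and the existence of the $\pi_i$ is the standing hypothesis. For each $L_{\G}$-formula $\psi(x_1,\dots,x_m)$ we define an $\F$-formula $\psi^*(x_1^1,\dots,x_m^d)$ and prove that for all $\bar a\in (F^d)^m$,
\[
\G\models\psi(\bar a)\iff \F\models\psi^*(\bar a).
\]

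The translation is defined clause by clause.
\begin{itemize}[leftmargin=2em]
\item An equality atom $x=y$ becomes $\bigwedge_{j=1}^d x^j=y^j$.
\item An atom $\mathsf K(x,y,z)$ becomes $\kappa(x,y,z)$.
\item An atom $P_i(\bar x)$ becomes $\pi_i(\bar x)$, with the variables substituted appropriately.
\item Boolean connectives are translated componentwise, that is, $(\neg\psi)^*:=\neg\psi^*$ and $(\psi\wedge\chi)^*:=\psi^*\wedge\chi^*$.
\item A point quantifier $\exists x\,\psi$ becomes the block $\exists x^1\cdots\exists x^d\,\psi^*$.
\end{itemize}
The atomic cases of the equivalence hold by the choice of $\kappa$ and $\pi_i$. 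The quantifier case holds because points of $F^d$ correspond exactly to $d$-tuples in $F$, so quantifying over one is the same as quantifying over the other. The connective cases are immediate.

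Now let $R\in\mathrm{Conc}_n(\G)$ be defined by a parameter-free $L_{\G}$-formula $\varphi(\bar x)$. Then $\varphi^*$ is a parameter-free $\F$-formula defining $R$, viewed as a relation on $F^{dn}$ under the identification $(F^d)^n\cong F^{dn}$ of Section~\ref{sec:2.1}. This proves the lemma.

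The only point that needs care is bookkeeping rather than mathematics: renaming variables in the substitution clauses so that no bound variable is captured, and keeping the block of $d$ object variables for each point variable distinct. No real obstacle is expected. Finiteness of the language is not needed here, because each formula uses only finitely many primitives.
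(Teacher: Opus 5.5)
Your proposal is correct and is essentially the paper's argument: the paper's proof simply replaces each occurrence of $\mathsf K,P_1,\dots,P_k$ in a defining $L_{\G}$-formula by the corresponding parameter-free $\F$-formula. It leaves the handling of equality atoms and point quantifiers implicit in the tuple convention of Remark~\ref{rem:2.3}, whereas your inductive translation makes those steps explicit.
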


\begin{proof}
	Each primitive relation of $\G$ is defined by a
parameter-free $\F$-formula. Replacing each occurrence of
$\mathsf K,P_1,\dots,P_k$ in an $L_{\G}$-formula defining a concept by
the corresponding $\F$-formulas yields a parameter-free
$\F$-definition of the same relation.
\end{proof}

\begin{theorem}[Dual inclusion theorem]
	\label{thm:7.2}
Let $\F$ expand
an ordered field (or a field with more than two elements), let $d\ge 2$, and let
$\G, \G'$ be finite-relational, parameter-free
$\F$-definable coordinate geometries on the same domain
$F^d$. Assume that $\G'$ is semilinearly faithful over
$\F$. Then \[
\mathrm{Conc}(\G)\subseteq \mathrm{Conc}(\G')\ \Longleftrightarrow\ \mathrm{AffAut}(\G')\subseteq \mathrm{AffAut}(\G).
\]
\end{theorem}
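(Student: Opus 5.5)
The plan is to prove the two implications separately. The forward direction is elementary. The reverse direction reduces to the main theorem, Theorem~\ref{thm:6.1}, applied to $\G'$, which is where the semilinear-faithfulness hypothesis on $\G'$ enters.

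($\Rightarrow$) Assume $\mathrm{Conc}(\G)\subseteq\mathrm{Conc}(\G')$ and let $A\in\mathrm{AffAut}(\G')$. We show $A\in\mathrm{Aut}(\G)$; since $A$ is already affine, this gives $A\in\mathrm{AffAut}(\G)$.
\begin{itemize}
\item Each primitive $\mathsf K,P_1,\dots,P_k$ of $\G$ is trivially a concept of $\G$. By hypothesis it is then a concept of $\G'$.
\item Every automorphism of $\G'$ preserves every parameter-free $L_{\G'}$-definable relation. So $A$ preserves each primitive of $\G$, i.e.\ $A(P_i^{\G})=P_i^{\G}$ and $A(\mathsf K)=\mathsf K$.
\item Since $A$ is a bijection of $F^d$ preserving all primitives of $\G$ in both directions, it is an automorphism of $\G$.
\end{itemize}
No hypothesis on faithfulness is needed here.

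($\Leftarrow$) Assume $\mathrm{AffAut}(\G')\subseteq\mathrm{AffAut}(\G)$ and let $R\in\mathrm{Conc}(\G)$.
\begin{itemize}
\item By Lemma~\ref{lem:7.1}, applied to $\G$, the relation $R$ is parameter-free $\F$-definable.
\item Since $R$ is a concept of $\G$, it is preserved by $\mathrm{Aut}(\G)$, and in particular by $\mathrm{AffAut}(\G)$.
\item By the assumed inclusion, $R$ is therefore preserved by $\mathrm{AffAut}(\G')$.
\item Now $\G'$ is finite-relational, parameter-free $\F$-definable and semilinearly faithful over $\F$, and $R$ is parameter-free $\F$-definable. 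The ($\Leftarrow$) direction of Theorem~\ref{thm:6.1}, applied to $\G'$, yields $R\in\mathrm{Conc}(\G')$.
\end{itemize}

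Neither direction presents a real obstacle. The only points needing care are in the reverse direction: Lemma~\ref{lem:7.1} is what supplies the ambient definability of $R$ that Theorem~\ref{thm:6.1} requires, and the main theorem must be invoked for $\G'$ rather than $\G$. This is why faithfulness is assumed only for $\G'$.
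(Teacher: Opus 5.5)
Your proof is correct and follows essentially the same route as the paper's: in the forward direction, the primitives of $\G$ are concepts of $\G'$ and are therefore preserved by every affine automorphism of $\G'$. In the reverse direction, Lemma~\ref{lem:7.1} supplies ambient definability, and Theorem~\ref{thm:6.1} is then applied to the semilinearly faithful geometry $\G'$.
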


\begin{proof}
($\Rightarrow$) Suppose
$\mathrm{Conc}(\G)\subseteq \mathrm{Conc}(\G')$ and let
$\alpha\in \mathrm{AffAut}(\G')$. Then $\alpha$ preserves every
relation in $\mathrm{Conc}(\G')$, hence every relation in
$\mathrm{Conc}(\G)$. In particular, $\alpha$ preserves every
primitive relation of $\G$, because primitives are definable (hence
concepts) of $\G$. Therefore $\alpha\in \mathrm{Aut}(\G)$. Since
$\alpha$ is affine, $\alpha\in \mathrm{AffAut}(\G)$.

($\Leftarrow$) Suppose
$\mathrm{AffAut}(\G')\subseteq \mathrm{AffAut}(\G)$. Let
$R\in \mathrm{Conc}(\G)$. By Lemma \ref{lem:7.1}, $R$ is parameter-free
$\F$-definable. Moreover, $R$ is preserved by
$\mathrm{AffAut}(\G)$, hence by $\mathrm{AffAut}(\G')$. Since
$\G'$ is semilinearly faithful, Theorem \ref{thm:6.1} implies
$R\in \mathrm{Conc}(\G')$. Thus
$\mathrm{Conc}(\G)\subseteq \mathrm{Conc}(\G')$.
\end{proof}

\section{Examples beyond pure-field definability}
\label{sec:8}

The following examples confirm that semilinear faithfulness can hold even when the primitives are not definable in the pure field/ordered-field language (and hence, when the geometry is not FFD).

\subsection{Named-scalar geometry}
\label{sec:8.1}

Fix an ordered field expansion \[
\F_c:=\langle F;+ ,\cdot,0,1,<,c\rangle
\] where $c\in F$ is a distinguished constant symbol (which we assume is not
parameter-free definable in the pure ordered-field reduct).
Assume $d\ge 2$, and define a ternary relation $Q_c\subseteq (F^d)^3$ by \[
Q_c(x,y,z)\ \iff\ y=x+c(z-x).
\] Let \[
\G_c:=\langle F^d;\ \mathsf{Bw},\ Q_c\rangle,
\] and note that $\G_c$ is a finite-relational, parameter-free
$\F_c$-definable coordinate geometry.

\begin{proposition}
	\label{prop:8.1}
The geometry $\G_c$ is semilinearly faithful
over $\F_c$.
\end{proposition}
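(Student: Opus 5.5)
Fix an ultrafilter $U$ and $\alpha\in\mathrm{Aut}(\G_c(\F_c^U))$, and write $c^U$ for the interpretation of $c$ in $\F_c^U$ (the diagonal class of $c$). The plan is to forget $Q_c$, apply the Fundamental Theorem of Affine Geometry, and then use $Q_c$ to show that the residual ordered-field automorphism fixes $c^U$.

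\textbf{Step 1: factor using betweenness alone.} Consider the reduct $\langle (F^U)^d;\mathsf{Bw}\rangle$. Its only primitive is definable in the pure ordered-field reduct of $\F_c^U$, and $d\ge2$. So Proposition~\ref{prop:4.4}, applied as in Lemma~\ref{lem:4.5} to the ordered field $F^U$ with this one-relation geometry, gives
\[
\alpha=A\circ\widetilde\sigma ,
\]
where $A$ is an affine map $x\mapsto Mx+b$ preserving $\mathsf{Bw}$ and $\sigma$ is an automorphism of the pure ordered field $F^U$. Every affine bijection preserves $\mathsf{Bw}$, so nothing is lost here.

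\textbf{Step 2: show $\sigma(c^U)=c^U$.} This is the main point. Take $x=0$ and $z=e_1$, and set $y=c^Ue_1$, so that $Q_c^U(0,c^Ue_1,e_1)$ holds. Since $\alpha$ preserves $Q_c^U$, so does the triple of images. We have $\widetilde\sigma(0)=0$, $\widetilde\sigma(e_1)=e_1$ and $\widetilde\sigma(c^Ue_1)=\sigma(c^U)e_1$. Applying $A$ gives the points $b$, $Me_1+b$ and $\sigma(c^U)Me_1+b$. The relation $Q_c^U$ then yields
\[
\sigma(c^U)Me_1+b \;=\; b+c^U\,Me_1 .
\]
Since $Me_1\ne0$, we conclude $\sigma(c^U)=c^U$. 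Because $\sigma$ also preserves $+,\cdot,0,1,<$, it follows that $\sigma\in\mathrm{Aut}(\F_c^U)$.

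\textbf{Step 3: show $A\in\mathrm{AffAut}(\G_c(\F_c^U))$.} Write $A=\alpha\circ\widetilde\sigma^{-1}$. We know $\widetilde\sigma$ preserves $\mathsf{Bw}$ and $Q_c^U$: the relation $Q_c^U$ is parameter-free definable in $\F_c^U$ and $\sigma$ is an automorphism of that structure (alternatively, check directly that $\sigma(y)=\sigma(x)+c^U(\sigma(z)-\sigma(x))$). So $\widetilde\sigma^{-1}$ preserves both relations, and hence $A$ is an automorphism of $\G_c(\F_c^U)$. It is affine by construction. This is exactly the factorisation required by Definition~\ref{def:5.1}.

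\textbf{Where the difficulty lies.} The only real step is Step~2: the primitive $Q_c$ has to pin down the scalar $c$ against semilinear twisting. Once the reduction to FTAG via Proposition~\ref{prop:4.4} is in place, this is a one-line computation. One also has to check that Proposition~\ref{prop:4.4} applies over $F^U$ with the bare geometry $\langle (F^U)^d;\mathsf{Bw}\rangle$. It does, since $F^U$ is an ordered field by Łoś's theorem.
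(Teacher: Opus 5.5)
Your proposal is correct and follows essentially the paper's route: you factor through Proposition~\ref{prop:4.4} on the $\mathsf{Bw}$-reduct, use the triple $(0,ce_1,e_1)$ to force $\sigma(c)=c$, and take $A=\alpha\circ\widetilde\sigma^{-1}$ as the affine automorphism. The only difference is cosmetic: you push the triple through $A$ by direct computation, whereas the paper first shows that every affine map preserves $Q_c$, so that $\widetilde\sigma=A^{-1}\circ\alpha$ preserves $Q_c$, and only then evaluates at that triple.
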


\begin{proof}
Fix an ultrafilter $U$ and let
$\alpha\in \mathrm{Aut}(\G_c(\F_c^U))$. Then $\alpha$
preserves $\mathsf{Bw}$, so $\alpha$ is an automorphism of the
ordered affine geometry $\langle (F^U)^d;\mathsf{Bw}\rangle$ over the
ordered-field reduct $F^U$ of $\F_c^U$. By Proposition \ref{prop:4.4}
applied to $\langle (F^U)^d;\mathsf{Bw}\rangle$, we may write \[
\alpha \ =\ A\circ \widetilde\sigma,
\] where $A\in \mathrm{Aff}((F^U)^d)$ is affine and $\sigma$ is an
automorphism of the ordered-field reduct of $\F_c^U$.

We first note that every affine map preserves the relation $Q_c$.
Indeed, if $A(x)=Lx+b$ with $L\in \mathrm{GL}_d(F^U)$, then \[
A\big(x+c(z-x)\big) \ =\ Lx+b+c(Lz-Lx)\ =\ A(x)+c(A(z)-A(x)),
\] so $Q_c(x,y,z)\Rightarrow Q_c(Ax,Ay,Az)$. Applying the same
calculation to the affine inverse $A^{-1}$ gives the converse
implication. Thus every affine map respects $Q_c$, in the sense that
it carries the relation onto itself.

Since $\alpha$ is an automorphism of $\G_c(\F_c^U)$ it
preserves $Q_c$, and we have just seen that $A$ preserves $Q_c$.
Hence $\widetilde\sigma = A^{-1}\circ \alpha$ also preserves $Q_c$.
In particular, \[
Q_c(0,\ c e_1,\ e_1)
\] holds in $(F^U)^d$ (where $e_1=(1,0,\dots,0)$), so applying
$\widetilde\sigma$ yields \[
Q_c\big(0,\ \sigma(c)e_1,\ e_1\big).
\] Unwinding the definition of $Q_c$ with $x=0$ and $z=e_1$, this
implies $\sigma(c)e_1 = c e_1$, hence $\sigma(c)=c$. Therefore
$\sigma$ is an automorphism of the expanded structure
$\F_c^U=\langle F^U;+,\cdot,0,1,<,c\rangle$. Moreover,
$\widetilde\sigma$ preserves $\mathsf{Bw}$, because $\sigma$ is an
ordered-field automorphism, and it preserves $Q_c$ by the preceding argument.
Thus $\widetilde\sigma\in\mathrm{Aut}(\G_c(\F_c^U))$.

Finally, define $A':=\alpha\circ (\widetilde\sigma)^{-1}$. Then $A'$
is affine, and since both $\alpha$ and $\widetilde\sigma$ are
automorphisms of $\G_c(\F_c^U)$, the map $A'$ is an affine
automorphism of $\G_c(\F_c^U)$. Thus
$\alpha=A'\circ \widetilde\sigma$ with
$A'\in \mathrm{AffAut}(\G_c(\F_c^U))$ and
$\sigma\in \mathrm{Aut}(\F_c^U)$, as required.
\end{proof}

\begin{remark}
	\label{rem:8.2}
Given our assumption on $c$, the
primitive $Q_c$ is not parameter-free definable in the pure
ordered-field language. For if a pure ordered-field formula defined
$Q_c$, then $c$ itself would be defined as the unique scalar $t$
satisfying \[
Q_c(0,t e_1,e_1).
\] The points $0$ and $e_1$ and the map $t\mapsto t e_1$ are
definable in the pure ordered-field language, while the displayed
condition is equivalent to $t=c$. This contradicts the assumption that
$c$ is not parameter-free definable in the pure ordered-field reduct.
Hence $\G_c$ is not FFD in the sense of \cite{MSS26}.
\end{remark}

\subsection{Frame-expanded exponential geometry}
\label{sec:8.2}

Retain the standing assumption $d\ge 2$. Let $\F_{\exp}$ be an expansion of an ordered field by a unary
function symbol $\exp$:
\[
\F_{\exp}\ :=\ \langle F; +,\cdot,0,1,<,\exp\rangle .
\]
Although the ambient expansion has a function symbol, the coordinate geometry
below is finite-relational: it records $\exp$ only through a binary relation
naming the relevant graph. (For the motivating case, take $\F=\mathbb R$ with its usual
exponential function.)

Define unary relations $O,E_1,\dots,E_d\subseteq F^d$ and binary relation $\Gamma_{\exp}\subseteq (F^d)^2$  by
\begin{itemize}[leftmargin=2em]
\item $O(x)$ if and only if $x=(0,\dots,0)$;
\item $E_i(x)$ if and only if $x=e_i$, the $i$th standard basis point.
\item $\Gamma_{\exp}(p,q)\ \iff\ \big(p_2=\cdots=p_d=0\big)\ \wedge\ \big(q_1=q_3=\cdots=q_d=0\big)\ \wedge\ \big(q_2=\exp(p_1)\big)$, where $p=(p_1,\dots,p_d)$ and $q=(q_1,\dots,q_d)$.
\end{itemize}

\begin{remark}
	\label{rem:8.3}
When
$\F=\mathbb R$, the relation $\Gamma_{\exp}$ is not definable in the
pure ordered-field reduct $\langle \mathbb R;+ ,\cdot,0,1,<\rangle$.
This is because every first-order definable subset of a real closed field is
semialgebraic by quantifier elimination; see
\cite[Theorem~2.6]{Coste2002}. If $\Gamma_{\exp}$ were definable, its
image under the coordinate projection
\[
 (p,q)\longmapsto (p_1,q_2)
\]
would therefore be semialgebraic by
\cite[Corollary~2.4]{Coste2002}. That image is exactly the graph of the
real exponential function. It would follow that $\exp:\mathbb R\to
\mathbb R$ is a semialgebraic function. But every semialgebraic function
$f:(A,+\infty)\to\mathbb R$ is eventually bounded in absolute value by
some power $x^n$, by \cite[Proposition~2.11]{Coste2002}; this contradicts
$\exp(x)/x^n\to\infty$ for every $n$.
\end{remark}

Let \[
\G_{\exp}\ :=\ \langle F^d;\ \mathsf{Bw},\ O,E_1,\dots,E_d,\ \Gamma_{\exp}\rangle .
\] Then $\G_{\exp}$ is a finite-relational, parameter-free
$\F_{\exp}$-definable coordinate geometry.

\begin{lemma}
	\label{lem:8.4}
	\label{lem:8.5}
For every ultrapower $U$,
$\mathrm{AffAut}(G_{\exp}(\F_{\exp}^U)) =
 \mathrm{AffAut}(\G_{\exp}) =
 \{\mathrm{id}\}$.
\end{lemma}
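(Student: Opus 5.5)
The proof works directly with an arbitrary ultrafilter $U$; the case of $\G_{\exp}$ itself follows by the same argument, or by taking $U$ principal. Let $A\in\mathrm{AffAut}(\G_{\exp}(\F_{\exp}^U))$ and write $A(x)=Lx+b$ with $L\in\mathrm{GL}_d(F^U)$ and $b\in(F^U)^d$. The unary primitives $O,E_1,\dots,E_d$ transfer to the singleton relations $\{0\}$ and $\{e_i\}$ in $(F^U)^d$, since they are defined by parameter-free formulas naming $0$ and $1$. Because $A$ is an automorphism, it maps each singleton primitive onto itself. So $A(0)=0$ gives $b=0$, and $A(e_i)=e_i$ gives $Le_i=e_i$ for every $i$. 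Hence $L$ is the identity matrix and $A=\mathrm{id}$.

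The key step is simply that an affine map is determined by its values on the affine frame $0,e_1,\dots,e_d$. Affinity alone already forces triviality here, and neither $\mathsf{Bw}$ nor $\Gamma_{\exp}$ is needed. The identity is trivially an affine automorphism, so both groups equal $\{\mathrm{id}\}$.

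The only point needing care is the transfer of the singleton primitives. By Section \ref{sec:3.2}, $O^U$ and $E_i^U$ are defined in $\F_{\exp}^U$ by the same formulas ``$x=(0,\dots,0)$'' and ``$x=e_i$''. By Łoś's theorem these still pick out exactly the points $0$ and $e_i$ of $(F^U)^d$. Once that is in place the argument is routine, and I expect no real obstacle.
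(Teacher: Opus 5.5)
Your proposal is correct and follows the paper's argument. The named frame $O,E_1,\dots,E_d$ forces any affine automorphism to fix $0$ and each $e_i$, hence to be the identity; you also spell out the steps the paper leaves implicit, namely $b=0$, $Le_i=e_i$, and why the singleton primitives are still $\{0\}$ and $\{e_i\}$ in the ultrapower.
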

\begin{proof}
	The relations $\{ O, E_1, \dots E_d \}$ specify the standard coordinate frame for $F^d$ (resp.  $(F^U)^d$), so any
	affine map preserving all of these relations must be the identity.
\end{proof}

\begin{proposition}
	\label{prop:8.6}
The geometry $\G_{\exp}$ is semilinearly
faithful over $\F_{\exp}$.
\end{proposition}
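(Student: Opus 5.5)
Fix an ultrafilter $U$ and let $\alpha\in\mathrm{Aut}(\G_{\exp}(\F_{\exp}^U))$. The plan mirrors the proof of Proposition~\ref{prop:8.1}. First, $\alpha$ preserves $\mathsf{Bw}$, so it is an automorphism of the ordered affine geometry $\langle (F^U)^d;\mathsf{Bw}\rangle$. By Proposition~\ref{prop:4.4} applied to this geometry over the ordered-field reduct of $\F_{\exp}^U$, we can write $\alpha=A\circ\widetilde\sigma$, with $A$ affine and $\sigma$ an automorphism of the ordered-field reduct.

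Next, $\widetilde\sigma$ fixes $0$ and every $e_i$, since $\sigma(0)=0$ and $\sigma(1)=1$. Also $\alpha$ fixes $0$ and every $e_i$, because it preserves the singleton relations $O,E_1,\dots,E_d$. Hence $A=\alpha\circ\widetilde\sigma^{-1}$ fixes the affine frame $0,e_1,\dots,e_d$. An affine map fixing a frame is the identity, so $A=\mathrm{id}$ and $\alpha=\widetilde\sigma$. This is the same observation as in Lemma~\ref{lem:8.4}.

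It remains to show that $\sigma$ preserves $\exp$, i.e.\ $\sigma\in\mathrm{Aut}(\F_{\exp}^U)$. Take $t\in F^U$ and set $p=te_1$, $q=\exp(t)e_2$, so that $\Gamma_{\exp}(p,q)$ holds. Since $\widetilde\sigma=\alpha$ preserves $\Gamma_{\exp}$, we get $\Gamma_{\exp}(\sigma(t)e_1,\sigma(\exp t)e_2)$. Unwinding the definition gives $\sigma(\exp t)=\exp(\sigma t)$, using that $\sigma(0)=0$ keeps the other coordinates zero. So $\sigma$ commutes with $\exp$. Together with preservation of $+,\cdot,0,1,<$, this makes $\sigma$ an automorphism of $\F_{\exp}^U$.

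Finally, we have $\alpha=\mathrm{id}\circ\widetilde\sigma$. Here $\mathrm{id}\in\mathrm{AffAut}(\G_{\exp}(\F_{\exp}^U))$ and $\sigma\in\mathrm{Aut}(\F_{\exp}^U)$, which is exactly the factorisation required by Definition~\ref{def:5.1}. The only delicate point is the step that forces $A=\mathrm{id}$. It needs $\widetilde\sigma$ to fix the frame points, which holds because $\sigma$ fixes $0$ and $1$. The rest is bookkeeping in the definition of $\Gamma_{\exp}$.
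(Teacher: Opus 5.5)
Your proposal is correct and follows the paper's proof essentially step for step. You factor $\alpha=A\circ\widetilde\sigma$ via Proposition~\ref{prop:4.4}, use the named frame $O,E_1,\dots,E_d$ to force $A=\mathrm{id}$, and then read off $\sigma(\exp t)=\exp(\sigma t)$ from preservation of $\Gamma_{\exp}$ at the points $(t,0,\dots,0)$ and $(0,\exp t,0,\dots,0)$, with the detail that $\sigma(0)=0$ keeps the remaining coordinates zero, which the paper leaves implicit.
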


\begin{proof}
Fix an ultrafilter $U$ and let
$\alpha\in \mathrm{Aut}(\G_{\exp}(\F_{\exp}^U))$. Then
$\alpha$ preserves $\mathsf{Bw}$, so $\alpha$ is an automorphism
of the ordered affine geometry $\langle (F^U)^d;\mathsf{Bw}\rangle$
over the ordered-field reduct $\F^U$ of $\F_{\exp}^U$. By
Proposition \ref{prop:4.4} applied to $\langle (F^U)^d;\mathsf{Bw}\rangle$, we
may write \[
\alpha \ =\ A\circ \widetilde\sigma,
\] where $A\in \mathrm{Aff}((\F^U)^d)$ is affine and $\sigma$ is an
automorphism of the ordered-field reduct of $\F_{\exp}^U$.

Because any ordered-field automorphism fixes $0$ and $1$, the
componentwise map $\widetilde\sigma$ fixes the named frame points:
$\widetilde\sigma(O)=O$ and $\widetilde\sigma(E_i)=E_i$ for each
$i=1,\dots,d$. Since $\alpha$ preserves each of $O,E_1,\dots,E_d$,
it follows that the affine map \[
A \ =\ \alpha\circ (\widetilde\sigma)^{-1}
\] also fixes $O,E_1,\dots,E_d$. Hence $A=\mathrm{id}$ (an affine
map fixing the origin and all standard basis points is the identity).
Therefore $\alpha=\widetilde\sigma$.

Since $\alpha$ preserves the primitive relation $\Gamma_{\exp}$, we
obtain commutativity with $\exp$. Indeed, for each $x\in F^U$ let \[
p=(x,0,\dots,0)\quad\text{and}\quad q=(0,\exp(x),0,\dots,0).
\] Then $\Gamma_{\exp}(p,q)$ holds. Applying
$\alpha=\widetilde\sigma$ gives \[
\Gamma_{\exp}\big((\sigma(x),0,\dots,0),\ (0,\sigma(\exp(x)),0,\dots,0)\big),
\] which by definition of $\Gamma_{\exp}$ implies
$\sigma(\exp(x))=\exp(\sigma(x))$ for all $x\in F^U$. Thus
$\sigma\in \mathrm{Aut}(\F_{\exp}^U)$.

Finally, since
$A=\mathrm{id}\in \mathrm{AffAut}(\G_{\exp}(\F_{\exp}^U))$, the
factorisation $\alpha=A\circ \widetilde\sigma$ witnesses semilinear
faithfulness.
\end{proof}

\begin{remark}
	\label{rem:8.7}
Here the affine-invariance condition is vacuous:
every relation is preserved by the trivial group
$\mathrm{AffAut}(\G_{\exp})$. The conclusion of Theorem \ref{thm:6.1} is
nevertheless meaningful: semilinear faithfulness implies that every
parameter-free $\F_{\exp}$-definable relation on $(F^d)^n$
is already a concept of $\G_{\exp}$. The named frame and the graph
relation recover enough of the expanded base structure to make that
conclusion possible.
\end{remark}

\section{Conclusion and further directions}
\label{sec:9}

We have isolated the affine-geometric input that drives the
representation theorem of \cite{MSS26}: the ultrapower argument
needs a semilinear factorisation of ultrapower automorphisms together
with a guarantee that the residual (coordinatewise) component preserves
the ambient definable relations under consideration. In the original FFD
setting, pure-field/pure-ordered-field definability provides that
guarantee automatically; in expanded settings we capture it explicitly
by \emph{semilinear faithfulness}. Under this hypothesis we recover an
Erlangen-style characterisation of the concepts of a geometry in terms
of invariance under its affine automorphism group (Theorem \ref{thm:6.1}) and the
corresponding dual inclusion theorem (Theorem \ref{thm:7.2}), and we have given
examples in which the primitives are not pure-field definable but the
same mechanism still applies.

A natural direction for further investigation is to remove the
specifically affine and coordinatised context and ask for a similar
``Erlangen via ultrapowers'' principle for more general base structures.

\subsection{A prospective generalisation beyond fields}
\label{sec:9.1}

Fix a one-sorted first-order structure $\M$ (not necessarily a field)
with universe $M$ and fix an integer $d\ge 1$. Let $Q_1,\dots,Q_k$
be parameter-free $\M$-definable relations on $M^d$ (of
arbitrary finite arities), and write \[
\P\ :=\ \langle M^d;\ Q_1,\dots,Q_k\rangle .
\] (We use the letter $\P$ for ``power structure'' to emphasise that we
are no longer in an intrinsically geometric setting.) By Remark \ref{rem:2.2}, we
may speak of the concepts $\mathrm{Conc}(\P)$ in exactly the same way
as for coordinate geometries.

For each ultrafilter $U$, define the induced structure \[
\P(\M^U)\ :=\ \langle (M^U)^d;\ Q_1^U,\dots,Q_k^U\rangle,
\] where each $Q_i^U$ is interpreted in $\M^U$ by the same
parameter-free $\M$-formula that defines $Q_i$ in
$\M$. 

To formulate an ``Erlangen via ultrapowers'' principle in this
generality one needs, in place of ``affine automorphisms'', a designated
class of ``geometric symmetries'' $H\le \mathrm{Aut}(\P)$ with two
properties:

\begin{enumerate}
\item
  invariance under $H$ is first-order expressible (so it transfers to
  ultrapowers), and
\item
  every ultrapower automorphism factors through $H(\M^U)$ and a
  coordinatewise lift of a base-structure automorphism.
\end{enumerate}

A convenient way to make the transfer step precise is to assume that
elements of $H$ are \emph{coded} inside $\M$.

\begin{assumption}[uniformly coded, $\M$-definable symmetry group]
	\label{ass:9.1}
Let $D \subseteq M^m$ be a parameter-free definable set, and let $D(h)$ be a formula defining
membership in this set. Suppose also that 
$\theta(x,y,h)$ is a parameter-free formula with $x,y\in M^d$ and $h\in M^m$. 
We assume that $\M$ satisfies the following first-order
requirements, and refer to the members of $D$ as \emph{admissible codes}:

\begin{enumerate}
\item
  For each admissible code $h$, the relation
  $\theta(\cdot,\cdot,h)$ is the graph of a total bijection
  $f_h:M^d\to M^d$. We call $f_h$ the map coded by $h$.
\item
  Every coded map $f_h$ respects every primitive relation $Q_i$ of $\P$.
\item
  The coded maps contain an identity and are closed under composition
  and inverse: there is a code for the identity map; for any two
  admissible codes there is an admissible code whose $\theta$-graph is
  the composite of their graphs; and for every admissible code there is
  one whose graph is the inverse graph.
\end{enumerate}
\end{assumption}

All three requirements can be written as a finite collection of
parameter-free first-order sentences in the language of $\M$.
For example, preservation of a primitive $Q_i$ is expressed by \[
\forall h\,\big(D(h)\rightarrow
\forall \bar x\,\forall \bar y\,
\big(\bigwedge_j\theta(x_j,y_j,h)\rightarrow
(Q_i(\bar x)\leftrightarrow Q_i(\bar y))\big)\Big),
\] where the displayed occurrence of $Q_i$ is replaced by its fixed
$\M$-definition.

For any structure $\mathcal N\equiv\M$, define \[
H(\mathcal N):=
\{\,f_h:\mathcal N\models D(h)\,\},
\] where $f_h$ is the map defined by the $\theta$-graph in
$\mathcal N$. In particular write $H:=H(\M)$.

\begin{lemma}[The coded symmetry group transfers]
	\label{lem:9.2}
Under
Assumption \ref{ass:9.1}, for every $\mathcal N\equiv\M$ one has \[
H(\mathcal N)\le \mathrm{Aut}(\P(\mathcal N)),
\] where $P(\mathcal N)$ is obtained by interpreting the formulas
defining $Q_1,\dots,Q_k$ in $\mathcal N$. In particular, \[
H(\M^U)\le \mathrm{Aut}(\P(\M^U))
\] for every ultrafilter $U$.
\end{lemma}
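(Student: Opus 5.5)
The plan is to use elementary equivalence: Assumption~\ref{ass:9.1} consists of finitely many parameter-free first-order sentences true in $\M$, so they hold in every $\mathcal N\equiv\M$. We then read off the subgroup properties inside $\mathcal N$. The ultrapower case follows because $\M^U\equiv\M$ by Łoś's theorem.

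First, fix $\mathcal N\equiv\M$. Requirement~(1) is expressed by the sentence saying that for every $h$ with $D(h)$, the formula $\theta(\cdot,\cdot,h)$ is total, functional, injective and surjective on $N^d$. Since this holds in $\mathcal N$, each $f_h$ with $\mathcal N\models D(h)$ is a well-defined bijection of $N^d$, so $H(\mathcal N)\subseteq\mathrm{Sym}(N^d)$.

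Second, requirement~(2) is expressed, for each $i$, by the displayed sentence with $Q_i$ replaced by its fixed $\M$-definition. In $\mathcal N$ that same formula defines $Q_i^{\mathcal N}$, the primitive of $\P(\mathcal N)$. So the sentence says that each $f_h$ satisfies $Q_i(\bar x)\leftrightarrow Q_i(f_h\bar x)$. Being a bijection, $f_h$ therefore maps $Q_i^{\mathcal N}$ onto itself: the forward image lies in $Q_i^{\mathcal N}$, and any $\bar y\in Q_i^{\mathcal N}$ equals $f_h(\bar x)$ for $\bar x=f_h^{-1}(\bar y)$, where the biconditional gives $\bar x\in Q_i^{\mathcal N}$. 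Hence $H(\mathcal N)\subseteq\mathrm{Aut}(\P(\mathcal N))$.

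Third, requirement~(3) is expressed by three sentences:
\begin{itemize}
\item $\exists h\,(D(h)\wedge\forall x\,\theta(x,x,h))$;
\item $\forall h\,\forall h'\,\big(D(h)\wedge D(h')\to\exists h''\,(D(h'')\wedge\forall x\,\forall z\,(\theta(x,z,h'')\leftrightarrow\exists y\,(\theta(x,y,h)\wedge\theta(y,z,h')))\big)$, fixing one composition order;
\item the analogous sentence for inverses, with $\theta(x,y,h')\leftrightarrow\theta(y,x,h)$.
\end{itemize}
These transfer to $\mathcal N$, so $H(\mathcal N)$ contains the identity and is closed under composition and inversion. It is therefore a subgroup of $\mathrm{Aut}(\P(\mathcal N))$. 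Since the inverse is also available, the choice of composition order does not matter.

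The only real subtlety is bookkeeping rather than mathematics. We must make sure $\P(\mathcal N)$ is defined by interpreting the same defining formulas that appear inside the transferred sentences, so that ``respects $Q_i$'' in $\mathcal N$ means exactly preservation of the primitives of $\P(\mathcal N)$. We must also note that only the finitely many sentences, not the full theory, are needed. For the ultrapower statement we take $\mathcal N=\M^U$ and note that $\P(\M^U)$ as defined earlier in this section coincides with this $\P(\mathcal N)$.
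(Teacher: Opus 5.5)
Your proposal is correct and follows essentially the same route as the paper: the finitely many first-order sentences encoding Assumption~\ref{ass:9.1} transfer to any $\mathcal N\equiv\M$. These make every coded map an automorphism of $\P(\mathcal N)$ and make $H(\mathcal N)$ a subgroup, and Łoś's theorem gives the ultrapower case. Your version adds detail the paper leaves implicit, namely the explicit closure sentences and the check that the biconditional plus bijectivity gives $f_h(Q_i^{\mathcal N})=Q_i^{\mathcal N}$.
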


\begin{proof}
The requirements that admissible codes define total
bijections, respect all the relations $Q_i$, and are closed under
identity, composition and inverse were stated as first-order sentences.
They hold in $\M$ by Assumption \ref{ass:9.1} and therefore in every
elementarily equivalent structure $\mathcal N$. The first two
requirements place every coded map in $\mathrm{Aut}(\P(\mathcal N))$,
and the third makes the resulting set of maps a subgroup. Every
ultrapower $\M^U$ is elementarily equivalent to $\M$
by Łoś's theorem.
\end{proof}

Under Assumption \ref{ass:9.1}, the transfer step has a direct analogue of Lemma
\ref{lem:4.3}.

\begin{lemma}[Transfer of $H$-invariance to ultrapowers]
	\label{lem:9.3}
Assume
Assumption \ref{ass:9.1} holds. Let $R\subseteq (M^d)^n$ be parameter-free
$\M$-definable, and assume $R$ is preserved by
$H=H(\M)$. Then for every ultrafilter $U$, the induced
relation $R^U\subseteq ((M^U)^d)^n$ is preserved by
$H(\M^U)$.
\end{lemma}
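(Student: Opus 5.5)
The argument mirrors Lemma \ref{lem:4.3}, with the affine codes $(B,b)$ replaced by the admissible codes of Assumption \ref{ass:9.1}. First fix a parameter-free $\M$-formula $\varphi(\bar x)$ defining $R$, where $\bar x=(x_1,\dots,x_n)$ and each $x_j$ is a $d$-tuple of variables in the sense of Remark \ref{rem:2.3}. Then form the single parameter-free sentence
\[
\Phi\ :=\ \forall h\,\Big(D(h)\rightarrow \forall \bar x\,\forall \bar y\,\Big(\bigwedge_{j=1}^n\theta(x_j,y_j,h)\rightarrow\big(\varphi(\bar x)\leftrightarrow\varphi(\bar y)\big)\Big)\Big).
\]
In any $\mathcal N\equiv\M$, clause (1) of Assumption \ref{ass:9.1} holds there too, since it is first-order. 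So $\theta(\cdot,\cdot,h)$ is the graph of the bijection $f_h$, and $\mathcal N\models\Phi$ says exactly that for every $f\in H(\mathcal N)$ and every $\bar a$ we have $\bar a\in\varphi^{\mathcal N}\iff f(\bar a)\in\varphi^{\mathcal N}$. In other words, $f(\varphi^{\mathcal N})\subseteq\varphi^{\mathcal N}$ and $f^{-1}(\varphi^{\mathcal N})\subseteq \varphi^{\mathcal N}$, hence $f(\varphi^{\mathcal N})=\varphi^{\mathcal N}$.

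The proof then has three steps.

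\begin{enumerate}
\item \emph{$\Phi$ holds in $\M$.} Take $f_h\in H$. Because $R$ is preserved by $H$, we have $f_h(R)=R$, which gives the forward implication. Applying $f_h^{-1}$, which is a bijection satisfying $f_h^{-1}(R)=R$, gives the converse. So each biconditional holds, and $\M\models\Phi$. The preservation hypothesis is stated as $\alpha(R)=R$, so the biconditional in fact follows directly, without needing the inverse clause of Assumption \ref{ass:9.1}.
\item \emph{$\Phi$ transfers.} By Łoś's theorem $\M^U\equiv\M$, so $\M^U\models\Phi$.
\item \emph{Identification with $R^U$.} By Section \ref{sec:3.2}, $\varphi^{\M^U}$, read on $((M^U)^d)^n$ via the identification $(M^d)^U\cong(M^U)^d$, is precisely $R^U$. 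The observation above, applied with $\mathcal N=\M^U$, then yields $f(R^U)=R^U$ for all $f\in H(\M^U)$.
\end{enumerate}

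The only delicate point is the bookkeeping in the last step. One must check that the ultrapower relation $R^U$ coincides with the relation defined by the same formula in $\M^U$, and that the maps in $H(\M^U)$ are the ones read off from $\theta$ in $\M^U$, which is how $H(\mathcal N)$ was defined. Both facts are direct consequences of Łoś's theorem and the definitions already given, so I expect no genuine obstacle.
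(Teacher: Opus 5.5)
Your proposal is correct and follows the paper's proof: you write the same sentence $\Phi$, transfer it to $\M^U$ by Łoś's theorem, and read it there as preservation of $R^U$ by $H(\M^U)$. Your extra bookkeeping makes explicit two points the paper leaves implicit: that $\M\models\Phi$ follows from $f_h(R)=R$ and bijectivity, and that clause (1) holding in $\M^U$ is what turns the biconditional into $f(R^U)=R^U$.
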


\begin{proof}
Choose a parameter-free $\M$-formula
$\varphi(\bar x)$ defining $R$. The hypothesis that $H$ preserves
$R$ can be expressed in $\M$ by the sentence \[
\forall h\,\big(D(h)\rightarrow
\forall \bar x\,\forall \bar y\,
\big(\bigwedge_{i=1}^n \theta(x_i,y_i,h)\rightarrow
(\varphi(\bar x)\leftrightarrow \varphi(\bar y))\Big)\Big).
\] By Łoś's theorem the same sentence holds in $\M^U$, which
says exactly that every element of $H(\M^U)$ preserves
$R^U$.
\end{proof}

To replace semilinear faithfulness, we can now ask for an ultrapower
factorisation using the uniformly interpreted group $H(\M^U)$
rather than affine maps.

\begin{definition}[$H$-factorisation faithfulness]
\label{def:9.4}
Assume Assumption \ref{ass:9.1} holds. Say that $\P$ is \emph{$H$-factorisation faithful over $\M$} if
for every ultrafilter $U$ and every \[
\alpha\in \mathrm{Aut}(\P(\M^U)),
\] there exist $h\in H(\M^U)$ and
$\sigma\in \mathrm{Aut}(\M^U)$ such that \[
\alpha=h\circ \widetilde\sigma,
\] where $\widetilde\sigma:(M^U)^d\to(M^U)^d$ acts componentwise.
\end{definition}

With Lemma \ref{lem:9.3} in hand, the proof of Theorem \ref{thm:6.1} adapts verbatim, now
with Theorem \ref{thm:3.2} applied directly to $\P$ rather than through the
coordinate-geometry corollary.

\begin{proposition}[Erlangen characterisation under coded $H$-factorisation faithfulness]
	\label{prop:9.5}
Assume Assumption \ref{ass:9.1} holds and assume that $\P$ is $H$-factorisation faithful over $\M$ in the sense of Definition \ref{def:9.4}. Let $R\subseteq (M^d)^n$ be any
parameter-free $\M$-definable relation. Then \[
R\in \mathrm{Conc}(\P)\ \Longleftrightarrow\ \big(\text{$R$ is preserved by }H\big).
\]
\end{proposition}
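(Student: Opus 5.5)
The plan is to follow the proof of Theorem~\ref{thm:6.1} step by step. Two substitutions are needed: replace affine automorphisms by the coded group $H(\M^U)$, and invoke Simon's criterion (Theorem~\ref{thm:3.2}) directly for $\P$ in place of Corollary~\ref{cor:3.4}.

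For ($\Rightarrow$), suppose $R\in\mathrm{Conc}(\P)$. Then $R$ is defined by a parameter-free formula in the language of $\P$, so every automorphism of $\P$ preserves it. By Lemma~\ref{lem:9.2} (with $\mathcal N=\M$) we have $H\le\mathrm{Aut}(\P)$, so $R$ is preserved by $H$.

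For ($\Leftarrow$), assume $H$ preserves $R$. First, Lemma~\ref{lem:9.3} gives that $H(\M^U)$ preserves $R^U$ for every ultrafilter $U$. Next, fix $U$ and $\alpha\in\mathrm{Aut}(\P(\M^U))$. By $H$-factorisation faithfulness (Definition~\ref{def:9.4}) we can write $\alpha=h\circ\widetilde\sigma$ with $h\in H(\M^U)$ and $\sigma\in\mathrm{Aut}(\M^U)$.
\begin{itemize}
\item The map $h$ preserves $R^U$ by the transferred invariance.
\item The map $\widetilde\sigma$ preserves $R^U$ because $R^U$ is defined in $\M^U$ by the same parameter-free formula $\varphi$ that defines $R$ (Łoś, as in Section~\ref{sec:3.2}), and $\sigma$ is an automorphism of $\M^U$ acting coordinatewise on $dn$-tuples.
\end{itemize}
Hence $\alpha(R^U)=R^U$.

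The step I expect to need the most care is the final application of Theorem~\ref{thm:3.2} to $\P$. Simon's criterion refers to automorphisms of the abstract ultrapower $\P^U$, with relation $R^U$ defined on $(M^d)^U$. What has been shown is invariance under automorphisms of $\P(\M^U)$, with $R^U$ defined on $(M^U)^d$. To connect the two, I would use the canonical bijection $(M^d)^U\cong(M^U)^d$. By Łoś's theorem this bijection carries each $Q_i^U$ computed in $\P^U$ onto the relation defined by the $\M$-formula for $Q_i$ in $\M^U$, so it is an isomorphism $\P^U\cong\P(\M^U)$. It also carries the ultrapower relation $R^U$ to the relation defined by $\varphi$ in $\M^U$. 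Conjugating by this isomorphism turns every $\beta\in\mathrm{Aut}(\P^U)$ into some $\alpha\in\mathrm{Aut}(\P(\M^U))$, and so $\beta$ preserves $R^U$.

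Since $U$ was arbitrary, condition~(2) of Theorem~\ref{thm:3.2} holds for $\P$ and $R$. Therefore $R$ is parameter-free definable in $\P$, i.e.\ $R\in\mathrm{Conc}(\P)$.
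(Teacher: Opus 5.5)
Your proposal is correct and follows the paper's proof essentially verbatim. You prove the forward direction via $H\le\mathrm{Aut}(\P)$, and the converse via Lemma~\ref{lem:9.3}, the factorisation $\alpha=h\circ\widetilde\sigma$, and Theorem~\ref{thm:3.2} applied to $\P$. Your explicit check that the canonical bijection $(M^d)^U\cong(M^U)^d$ is an isomorphism $\P^U\cong\P(\M^U)$, carrying the ultrapower relation $R^U$ onto the $\varphi$-defined relation, fills in a step the paper leaves implicit (as it also does in Corollary~\ref{cor:3.4}).
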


\begin{proof}
The forward direction is immediate since concepts are
preserved by all automorphisms, hence by $H\le \mathrm{Aut}(\P)$.

For the converse, assume $R$ is preserved by $H$. By Lemma \ref{lem:9.3}, for
each ultrafilter $U$ the relation $R^U$ is preserved by
$H(\M^U)$. Fix such a $U$ and let
$\alpha\in \mathrm{Aut}(\P(\M^U))$ be arbitrary. By $H$-factorisation faithfulness, write $\alpha=h\circ \widetilde\sigma$ with
$h\in H(\M^U)$ and $\sigma\in \mathrm{Aut}(\M^U)$.
Then $h$ preserves $R^U$ by the transferred hypothesis, and
$\widetilde\sigma$ preserves $R^U$ because $R^U$ is parameter-free
definable in $\M^U$ and $\sigma$ is an automorphism of
$\M^U$. Hence $\alpha(R^U)=R^U$ for every ultrafilter $U$
and every $\alpha\in\mathrm{Aut}(\P(\M^U))$.

Applying Theorem \ref{thm:3.2} to the structure $\P$, we conclude that $R$
is parameter-free definable in $\P$, i.e.~$R\in \mathrm{Conc}(\P)$.
\end{proof}

These schematic results indicate what must be supplied in order to push
the present method beyond affine coordinate geometries: a robust source
of ultrapower splittings and a natural, uniformly definable coding of
the intended symmetry group $H$.

\bibliographystyle{alpha}
\bibliography{lrb2026}

@book{Coste2002,
author = {Coste, M.},
title = {An Introduction to Semialgebraic Geometry},
institution = {Institut de Recherche Math\'ematique de Rennes}, 
year = 2002,
url = {https://perso.univ-rennes1.fr/michel.coste/polyens/SAG.pdf}
}

@book{Klein1872,
author = {F.~Klein},
title = {Vergleichende Betrachtungen \"{u}ber neuere geometrische Forschungen},
publisher = {Andreas Deichert, Erlangen}, year = 1872,
utl = {https://www.gutenberg.org/files/38033/38033-h/38033-h.htm}
}

@article{MSS26,
author = {Madar\'asz, J. and Stannett, M. and Sz\'ekely, G.},
title = {Definable coordinate geometries over fields},
journal = {The Review of Symbolic Logic}, 
note = {published online}, year = 2026, 
pages = {1--32},
doi = {10.1017/S175502032610118X}
}

\end{document}